\documentclass[a4paper,12pt]{amsart}
\usepackage{amsmath}
\usepackage{amssymb}
\usepackage{ifthen}
\usepackage{graphicx}
\usepackage{float}
\usepackage{cite}
\usepackage{amsfonts}
\usepackage{amscd}
\usepackage{amsxtra}
\usepackage{color}
\usepackage{geometry}
 \geometry{
 a4paper,
 total={150mm,235mm},
 left=20mm,
 top=40mm,
 }
\setlength{\textwidth}{16cm} 
\setlength{\oddsidemargin}{0cm}
\setlength{\evensidemargin}{0cm} \setlength{\footskip}{40pt}
\setlength{\emergencystretch}{3em}
\sloppy


\newtheorem{theorem}{Theorem}[section]
\newtheorem{lemma}[theorem]{Lemma}
\newtheorem{corollary}[theorem]{Corollary}

\newtheorem{definition}[theorem]{Definition}

\theoremstyle{definition}

\numberwithin{equation}{section}

\def\be{\begin{equation}}
\def\ee{\end{equation}}

\newcounter{alphabet}


\makeatletter
\makeatother

\begin{document}

\title[Moduli difference of inverse logarithmic coefficients]{Moduli difference of inverse logarithmic coefficients of univalent functions}

\author[Vasudevarao Allu]{Vasudevarao Allu}
\address{Vasudevarao Allu, School of Basic Sciences, Indian Institute of Technology Bhubaneswar,
	 Odisha, India.}
\email{avrao@iitbbs.ac.in}
\author[Amal Shaji]{Amal Shaji}
\address{Amal Shaji, School of Basic Sciences, Indian Institute of Technology Bhubaneswar,
Bhubaneswar-752050, Odisha, India.}
\email{amalmulloor@gmail.com}
\subjclass[2010]{30D30, 30C45, 30C50, 30C55.}
\keywords{Inverse coefficients, Successive coefficients, Univalent functions,Inverse logarithmic coefficients}

\begin{abstract}
Let $f$ be analytic in the unit disk and $\mathcal{S}$ be the subclass of
normalized univalent functions with $f(0) = 0$, and $f'(0) = 1$. Let $F$ be the inverse function of $f$, given by $F(w)=w+\sum_{n=2}^{\infty}A_nw^n$ defined on some disk $|w|\le r_0(f)$. The  inverse logarithmic
 coefficients $\Gamma_n$, $n \in \mathbb{N}$, of $f$ are defined by the equation $ \log(F(w)/w)=2\sum_{n=1}^{\infty}\Gamma_{n}w^{n},\,|w|<1/4.$ In this paper, we find the 
sharp upper and lower bounds for moduli difference of second and first inverse logarithmic coefficients, {\em i.e.,} $|\Gamma_2|-|\Gamma_1|$ for functions in class $\mathcal{S}$ and for functions in some important subclasses of univalent functions. 
\end{abstract}
\maketitle
\section{Introduction}\label{Introduction}
Let $\mathcal{H}$ denote the class of analytic functions in the unit disk $\mathbb{D}:=\{z\in\mathbb{C}:\, |z|<1\}$. Here $\mathcal{H}$ is 
a locally convex topological vector space endowed with the topology of uniform convergence over compact subsets of $\mathbb{D}$. Let $\mathcal{A}$ denote the class of functions $f\in \mathcal{H}$ such that $f(0)=0$ and $f'(0)=1$.  Let $\mathcal{S}$ 
denote the subclass of  $\mathcal{A}$ consisting of functions which are univalent ({\em i.e., one-to-one}) in $\mathbb{D}$. 
If $f\in\mathcal{S}$ then it has the following series representation
\begin{equation}\label{S}
	f(z)= z+\sum_{n=2}^{\infty}a_n z^n, \quad z\in \mathbb{D}.
\end{equation}

A domain $D \subset \mathbb{C}$ is said to be {\em starlike} with respect to a point $z_0\in D$ if the line segment joining $z_0$ to every other point $z \in D$ lies entirely in $D$. A function $f \in \mathcal{A}$ is called {\em starlike} if $f(\mathbb{D})$ is a starlike domain with respect to the origin. The class of univalent starlike functions is denoted by $\mathcal{S}^*$. A domain $D \subset \mathbb{C}$ is said to be {\em convex} if the line segment joining any two arbitrary points of $D$ lies entirely in $D$; {\em i.e.,} if it is starlike with respect to each points of $D$.
A function $f\in\mathcal{A}$ is said to be {\em convex} in $\mathbb{D}$ if $f(\mathbb{D})$ is a convex domain. The class of all univalent convex functions is denoted by $\mathcal{C}$ (see \cite{Dur83,
vasu-book-2018}). The
classes $\mathcal{S}^*$ of starlike functions and $\mathcal{C}$ of convex functions are analytically defined respectively as

\begin{equation*}
\begin{aligned}
\mathcal{S}^*&=\left\{ f \in \mathcal{A} : {\rm Re}\left(\cfrac{zf'(z)}{f(z)}\right)>0, z \in \mathbb{D} \right\}, \\[2mm]
\mathcal{C}&=\left\{ f \in \mathcal{A} : {\rm Re}\left(1+\cfrac{zf'(z)}{f(z)}\right)>0, z \in \mathbb{D} \right\}.
\end{aligned}
\end{equation*}

\medskip

\noindent  Given $\alpha \in [0,1)$, a function $f \in \mathcal{A}$  of the form \eqref{S} is called starlike functions
of order $\alpha$, if 
\begin{equation}\label{staralpha}
{\rm Re} \left( \cfrac{zf'(z)}{f(z)}\right)> \alpha, \quad z \in \mathbb{D}.
\end{equation}
The set of all such functions is denoted by $\mathcal{S}^*(\alpha)$. A function $f \in \mathcal{A}$ is called convex function of order $\alpha$, if
\begin{equation}\label{starconvex}
{\rm Re} \left( 1+\cfrac{zf''(z)}{f'(z)}\right)> \alpha, \quad z \in \mathbb{D}.
\end{equation}
The set of all such functions is denoted by $\mathcal{C}(\alpha)$. For $\alpha := 0$, these classes reduce to the well-known classes $\mathcal{S}^*$ and $\mathcal{C}$ , the class
of starlike functions and the class of convex functions, respectively.\\[2mm]
\noindent  A function $f \in \mathcal{A}$ of the form \eqref{S} is said to be strongly starlike of order $\alpha$,\, $(0 < \alpha \leq 1)$, if
\begin{equation}\label{stardef}
\left|\text{arg} \left(\cfrac{zf''(z)}{f'(z)}\right)\right|< \cfrac{\pi \alpha}{2}, \quad z \in \mathbb{D}.
\end{equation}
The set of all such functions is denoted by $\mathcal{S}^*_{\alpha}$.
\noindent A function $f \in \mathcal{A}$ of the form \eqref{S} belongs to $\mathcal{C}_{\alpha}$, the class strongly convex function of order $\alpha$,\,$(0 < \alpha \leq 1)$, if
\begin{equation}\label{convexdef}
\left|\text{arg} \left(1+\cfrac{zf''(z)}{f'(z)}\right)\right|< \cfrac{\pi \alpha}{2}, \quad z \in \mathbb{D}.
\end{equation}
\noindent The notion of strongly starlike functions was introduced by Stankiewicz \cite{Stankiewicz1} and independently by Brannan and Kirwan\cite{Brannan}. An external geometric characterisation of strongly starlike functions has been proposed by Stankiewicz \cite{Stankiewicz2}. Brannan and Kirwan\cite{Brannan} have obtained  a geometrical condition called $\delta-$ visibility which is sufficient for functions to be starlike.\\

In 1985, de Branges \cite{DeB} solved the famous Bieberbach conjecture, by showing that if $f \in \mathcal{S}$ of the form \eqref{S}, then $|a_n| \le n$ for $n \geq 2$ with equality holds for Koebe function $k(z):=z/(1-z)^2$ or its rotations. It was therefore natural to ask if for $f \in \mathcal{S}$, the inequality $||a_{n+1}| -|a_n|| \leq 1$
is true when $n \geq 2$. This problem was first studied
by Goluzin \cite{Gol46} with an aim to solve the Bieberbach conjecture. In 1963, Hayman \cite{Hay63} proved that $\big||a_{n+1}|-|a_n|\big |\le A$ for $f \in \mathcal{S}$, where $A \ge 1$
is an absolute constant and the best known estimate as of now is $3.61$ due to Grinspan \cite{Gri76}. On the other hand, for the class $\mathcal{S}$, the sharp bound is known only for $n=2$ (see \cite[Theorem~3.11]{Dur83}), namely
$$
-1\leq|a_3|-|a_2|\leq 1.029\ldots.
$$
Similarly, for functions
$f \in \mathcal{S}^*$, Pommerenke \cite{pom} has conjectured that $||a_{n+1}| - |a_n|| \leq 1$ which was proved later in 1978 by Leung \cite{leung}. For convex functions, Li and Sugawa \cite{LS17}  investigated the sharp upper bound of $|a_{n+1}|-|a_n|$ for $n\ge 2$, and sharp lower bounds for $n=2,3$. 

\medskip

For $f\in \mathcal{S}$, denote by $F$ the inverse of $f$ given by
\begin{equation}\label{InverseFunction}
F(w)=w+\sum_{n=2}^{\infty}A_nw^n,
\end{equation}
valid on some disk $|w|\le r_0(f)$.
Since $f(f^{-1}(w))=w$, by equating the coefficients, we can easily obtain
\begin{equation}\label{A2}
	A_2=-a_2 \text{ and } A_3=2a_2^2-a_3.
\end{equation}
The inverse functions are studied by several authors in different perspective (see, for instance, \cite{vasu-book-2018, SimT20} and reference therein). Recently, Sim and Thomas \cite{SimT20,ST21} obtained sharp upper and lower bounds on the difference of the moduli of successive inverse coefficients for the subclasses of univalent functions.\\[2mm]
The {\it Logarithmic coefficients} $\gamma_{n}$ of $f\in \mathcal{S}$ are defined by,
\begin{equation}\label{amal-1}
	F_{f}(z):= \log\frac{f(z)}{z}=2\sum\limits_{n=1}^{\infty}\gamma_{n}z^{n}, \quad z \in \mathbb{D}.
\end{equation}
The logarithmic coefficients $\gamma_{n}$ play a central role in the theory of univalent functions. A very few exact upper bounds for $\gamma_{n}$ seem to have been established. The significance of this problem in the context of Bieberbach conjecture was pointed by Milin\cite{milin} in his conjecture. Milin \cite{milin} has conjectured that for $f\in \mathcal{S}$ and $n\ge 2$, 
$$\sum\limits_{m=1}^{n}\sum\limits_{k=1}^{m}\left(k|\gamma_{k}|^{2}-\frac{1}{k}\right)\le 0,$$
which led De Branges, by proving this conjecture, to the proof of Bieberbach conjecture  \cite{DeB}. For the Koebe function $k(z)=z/(1-z)^{2}$, the logarithmic coefficients are $\gamma_{n}=1/n$. Since the Koebe function $k$ plays the role of extremal function for most of the extremal problems in the class $\mathcal{S}$, it is expected that $|\gamma_{n}|\le1/n$ holds for functions in $\mathcal{S}$. But this is not true in general, even in order of magnitude. Recently, various authors have taken an interest in the examination of logarithmic coefficients within the class $\mathcal{S}$ and its subclasses (see\cite{vasu2017,ALT,Thomas-2016}).


\medskip
The notion of inverse logarithmic coefficients, {\em i.e.}, logartithmic coefficients of inverse of $f$, has been proposed by Ponnusamy {\it et al.} \cite{samyinverselog}. The {\it inverse logarithmic
 coefficients} $\Gamma_n$, $n \in \mathbb{N}$, of $f$ are defined by the equation
\begin{equation}\label{Gamma}
	F_{f^{-1}}(w):= \log\frac{f^{-1}(w)}{w}=2\sum\limits_{n=1}^{\infty}\Gamma_{n}w^{n}, \quad |w|<1/4.
\end{equation}
By differentiating \eqref{Gamma} together with \eqref{A2}, we obtain
\begin{equation}\label{Gammaaa}
\begin{aligned}
& \Gamma_{1}=-\cfrac{1}{2}\,a_2,\\[2mm]
& \Gamma_2=-\cfrac{1}{2}\, a_3+\cfrac{3}{4}\, a_2^2.
\end{aligned}
\end{equation}
In 2018, Ponnusamy {\it et al.} \cite{samyinverselog} obtained the sharp upper bound  for the logarithmic inverse coefficients for the class $\mathcal{S}$. In fact Ponnusamy {\it et al.} \cite{samyinverselog} have proved that when $f\in \mathcal{S}$, $$|\Gamma_n| \leq \frac{1}{2n}
  \left(
  \begin{matrix}
    2n \\
    n
  \end{matrix}
  \right),
  \quad n \in \mathbb{N}
$$
and equality holds only for the Koebe function or one of its rotations. Further, Ponnusamy {\it et al.} \cite{samyinverselog} have obtained the sharp bound for the initial logarithmic inverse coefficients for some of the important geometric subclasses of $
\mathcal{S}$.\\

In 2023, Lecko and Partyka \cite{adamlog} studied the problem of finding the upper and lower bound for $|\gamma_2|-|\gamma_1|$ for functions in class $\mathcal{S}$ using  Loewner technique. Recently, Obradovi\'{c} and  Tuneski \cite{OTnew} provided a simple proof of the same problem. Also kumar and cho \cite{kumar2} obtained sharp upper and lower bounds for $|\gamma_2|-|\gamma_1|$ for functions in subclasses of class $\mathcal{S}$. \\[2mm]
\noindent  In this paper, we consider the problem of finding sharp upper and lower bound of moduli difference of second and first inverse logarithmic coefficients.

\noindent Before presenting the main results of this paper, we will discuss some important subclasses of univalent functions


\begin{definition}
A locally univalent function $f\in \mathcal{A}$ is said to belong to $\mathcal{G}(\nu)$ for some $\nu>0$, if it satisfies the condition
\begin{equation*}
{\rm Re}\bigg(1+\cfrac{zf''(z)}{f'(z)}\bigg)<1+\cfrac{\nu}{2}, \quad z\in \mathbb{D}.
\end{equation*}
\end{definition}

\noindent In 1941, Ozaki \cite{Ozaki41} introduced the class $\mathcal{G}(1)=:\mathcal{G}$ and proved that functions in $\mathcal{G}$ are univalent in $\mathbb{D}$. Later, Umezawa \cite{UME52} studied the class $\mathcal{G}$ and showed that this class contains the class of functions convex in one direction. Moreover, functions in $\mathcal{G}$ are proved to be starlike in $\mathbb{D}$ (see \cite{ponnusamy95},\cite{ponnusamy07}). Thus, the class $\mathcal{G}(\nu)$ is included in $\mathcal{S}^*$ whenever $\nu\in (0,1]$. It can be easily seen that functions in $\mathcal{G}(\nu)$ are not necessarily univalent in $\mathbb{D}$ if $\nu>1$. 

\begin{definition}
For $-1/2< \lambda\le 1$, the class $\mathcal{F}(\lambda)$ defined  by 
\begin{equation*}
	\mathcal{F}(\lambda)=\left\{f\in \mathcal{A}:\,	{\rm Re}\left(1+\frac{zf''(z)}{f'(z)}\right)>\frac{1}{2} -\lambda ~\mbox{ for $z \in \mathbb{D}$}\right\}.
\end{equation*}
\end{definition}
\noindent We note that clearly $\mathcal{F}(1/2)=:\mathcal{C}$ is the usual class of convex functions. Moreover, for $\lambda=1$, we obtain the class $\mathcal{F}(1)=:\mathcal{C}(-1/2)$ which considered by many researcher in the recent years (see \cite{AV19,Vibhuti,Obradovic15, PSY14}). Also, functions in $\mathcal{C} (-1/2)$ are not necessarily starlike but are convex in some direction and so are close-to-convex. Here, we recall that
a function $f \in \mathcal{A}$ is called close-to-convex if $f(\mathbb{D})$ is close-to-convex domain,{\em  i.e.} the complement of $f(\mathbb{D})$ in $\mathcal{C}$ is the union of closed half lines with pairwise disjoint
interiors. Pfaltzgraff {\it et al.} \cite{pft}  has proved that $F(\lambda)$  
contains non-starlike functions for all $ 1/2 < \lambda \leq 0$. \\

\noindent Next we will discuss about the family of spirallike functions.
\begin{definition}
The family $\mathcal{S}_\gamma(\alpha)$ of $\gamma$-spirallike functions of order $\alpha$ is defined by
$$
\mathcal{S}_\gamma(\alpha)=\left\{f \in \mathcal{A}: \operatorname{Re}\left(e^{-i \gamma} \frac{z f^{\prime}(z)}{f(z)}\right)>\alpha \cos \gamma \text { for } z \in \mathbb{D}\right\},
$$
where $\alpha \in[0,1)$ and $\gamma \in(-\pi / 2, \pi / 2)$.
\end{definition}
\noindent Each function in $\mathcal{S}_\gamma(\alpha)$ is univalent in $\mathbb{D}$ (see \cite{16}). Clearly, $\mathcal{S}_\gamma(\alpha) \subset \mathcal{S}_\gamma(0) \subset \mathcal{S}$ whenever $0 \leq \alpha<1$. Functions in $\mathcal{S}_\gamma(0)$ are called $\gamma$-spirallike, but they do not necessarily belong to the starlike family $\mathcal{S}^*$. The class $\mathcal{S}_\gamma(0)$ was introduced by Špaček \cite{27}. Moreover, $\mathcal{S}_0(\alpha)=: \mathcal{S}^*(\alpha)$ is Robertson's class of functions that are starlike functions of order $\alpha$, and $\mathcal{S}^*(0)=\mathcal{S}^*$ is the class of starlike functions. The class $\mathcal{S}^*(\alpha)$ is meaningful even if $\alpha<0$, although univalency will be destroyed in this situation. \\[2mm]
We consider another family of functions that includes the class of convex functions as a proper
subfamily.
\begin{definition}
The family $\mathcal{C}_\gamma (\alpha )$ of  $\gamma$-convex functions of order $\alpha$ is defined by
$$
{\mathcal C}_\gamma (\alpha)=\left\{f\in \mathcal{A}:{\rm Re } \bigg( e^{-i\gamma }\left ( 1+\frac{zf''(z)}{f'(z)}\right )\bigg)>\alpha \cos \gamma\right\}
$$
where $0\le \alpha <1$ and $-\pi/2<\gamma<\pi/2$.
\end{definition} 
\noindent We may set ${\mathcal C}_0(\alpha)=:{\mathcal C}(\alpha)$ which consists of the normalized
convex functions of order $\alpha$. Function in ${\mathcal C}_\gamma (0)=:{\mathcal C}_{\gamma}$ need not be
univalent in $\mathbb{D}$ for general values of $\gamma $
$(|\gamma|<\pi/2)$. For example, the function
$f(z)=i(1-z)^i-i$ is known to belong to
${\mathcal C}_{\pi/4}\backslash {\mathcal S}$. Robertson
\cite{Robertson-69} has shown  that $f\in\mathcal{C}_{\gamma}$ is
univalent if $0<\cos \gamma\leq 0.2315\cdots$. Finally, Pfaltzgraff \cite{Pfaltzgraff} has shown that
$f\in\mathcal{C}_{\gamma}$ is univalent whenever $0<\cos \gamma\leq
1/2$. This settles the improvement of the range of $\gamma$ for which $f\in\mathcal{C}_{\gamma}$
is univalent. On the other hand,  in \cite{SinghChic-77} it was also shown that
functions in ${\mathcal C}_\gamma$ which satisfy $f''(0)=0$ are
univalent for all real values of $\gamma$ with $|\gamma|<\pi /2$.
For the recent study of the class for particular values of $\alpha$ and $\gamma$, we refer to \cite{APS19}.
\section{Preliminary Results}

\begin{theorem}{\textbf{Fekete-Szegö Theorem}}\cite{feketo}:
 If $f \in \mathcal{S}$ of the form \eqref{S}, then 
\begin{equation*}
|a_3-\mu a_2^2|\le\left \{
	\begin{array}{ll}
		4\mu-3, & {\mbox{ if }} \mu \geq 1,
		\\[2mm]
		1+2e^{-2\mu/(1-\mu)}, & {\mbox{ if }} 0 < \mu < 1,\\[2mm]
		3-4\mu, &{\mbox{ if}} \,\,   \mu \leq 0,
	\end{array}
	\right.
\end{equation*}
This bound is sharp for each $\mu$.
\end{theorem}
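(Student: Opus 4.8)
The plan is to reduce the extremal problem over the geometrically-defined class $\mathcal{S}$ to the concrete parametric family furnished by Loewner theory. Since the coefficient functionals $f\mapsto a_2$ and $f\mapsto a_3$ are continuous for locally uniform convergence and single-slit mappings are dense in $\mathcal{S}$, it suffices to bound $|a_3-\mu a_2^2|$ for slit maps. For such a map there is a driving function $\kappa(t)=e^{i\theta(t)}$ with $|\kappa(t)|=1$ for which Loewner's equation gives the exact representations
\[
a_2=2\int_0^\infty e^{-t}\kappa(t)\,dt,\qquad a_3=4\left(\int_0^\infty e^{-t}\kappa(t)\,dt\right)^2-2\int_0^\infty e^{-2t}\kappa(t)^2\,dt,
\]
so that, writing $I=\int_0^\infty e^{-t}\kappa\,dt$ and $J=\int_0^\infty e^{-2t}\kappa^2\,dt$, one has $a_3-\mu a_2^2=4(1-\mu)I^2-2J$. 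Replacing $f(z)$ by $e^{-i\alpha}f(e^{i\alpha}z)$ multiplies this quantity by $e^{2i\alpha}$ and leaves its modulus fixed, so I may assume throughout that $a_3-\mu a_2^2$ is real and nonnegative; taking real parts and setting $X=\int_0^\infty e^{-t}\cos\theta\,dt$, $Y=\int_0^\infty e^{-t}\sin\theta\,dt$ then yields
\[
a_3-\mu a_2^2=-1+4\int_0^\infty e^{-2t}\sin^2\theta\,dt+4(1-\mu)\left(X^2-Y^2\right).
\]

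For $\mu\ge 1$ the factor $1-\mu$ is nonpositive and the crude estimates $|I|\le\int_0^\infty e^{-t}\,dt=1$ and $|J|\le\int_0^\infty e^{-2t}\,dt=\tfrac12$ already give $|a_3-\mu a_2^2|\le 4(\mu-1)+1=4\mu-3$. For $\mu\le 0$ the triangle inequality is too lossy; instead I would start from the displayed real expression and combine $e^{-2t}\le e^{-t}$ with the Cauchy--Schwarz inequality $X^2\le\int_0^\infty e^{-t}\cos^2\theta\,dt$ to get $\int_0^\infty e^{-2t}\sin^2\theta\,dt\le 1-X^2\le 1-X^2+Y^2$; since $1-\mu\ge 1$ and $1-X^2+Y^2\ge 0$ this forces the bound $3-4\mu$. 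In both ranges equality holds for the Koebe function, for which $\theta\equiv 0$.

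The genuine difficulty is the range $0<\mu<1$, where the extremal map is not the Koebe function and $\sin\theta$ must be allowed to vary. Here I would treat the displayed expression as a variational problem for $\theta(\cdot)$ in which $X,Y$ enter as global (integral) unknowns. Differentiating, and anticipating $Y=0$ at the optimum (the term $-Y^2$ acting as a penalty), leads to the Euler--Lagrange condition $\cos\theta(t)=(1-\mu)Xe^{t}$ on the interval where the right-hand side stays below $1$, with $\theta(t)=0$ beyond the switching time $t_0$ defined by $(1-\mu)Xe^{t_0}=1$. Feeding this profile back into the constraint $X=\int_0^\infty e^{-t}\cos\theta\,dt$ gives $t_0=\mu/(1-\mu)$ and $X=e^{-t_0}/(1-\mu)$, and substituting into the objective collapses, after the $t_0e^{-2t_0}$ terms cancel, to exactly $1+2e^{-2\mu/(1-\mu)}$. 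I expect this optimization to be the main obstacle: one must justify passing to a pointwise Euler--Lagrange equation for such a nonlocal functional, confirm that the critical profile is a genuine maximum rather than a saddle (a comparison with the best ``real $\kappa$'' competitor, which only reaches $3-4\mu<1+2e^{-2\mu/(1-\mu)}$, makes this plausible), and verify that $Y=0$ there.

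Finally, sharpness in the middle range is automatic from the construction: the slit map generated by the extremal driving function $\theta(t)$ above realizes the bound, so each of the three branches is attained. Assembling the three cases produces the stated piecewise estimate, and a brief continuity check at the endpoints, where $1+2e^{-2\mu/(1-\mu)}$ matches $3-4\mu$ as $\mu\to 0^{+}$ and $4\mu-3$ as $\mu\to 1^{-}$, confirms that the branches fit together consistently.
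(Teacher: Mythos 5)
The paper itself offers no proof of this statement: it is quoted as a classical result with a citation to Fekete--Szeg\"o (it is Theorem 3.8 in Duren's book), so there is no internal argument to compare against. Your route is precisely the classical one --- Loewner's parametric method --- and your bookkeeping is correct: with $a_2=-2\int_0^\infty e^{-t}\kappa\,dt$ (you dropped the sign, which is immaterial after squaring and taking moduli) one indeed gets $a_3-\mu a_2^2=4(1-\mu)I^2-2J$, the rotation normalization is legitimate, the two outer ranges follow as you say, and in the middle range the stationary profile $\cos\theta(t)=(1-\mu)Xe^{t}$ up to the switching time, the self-consistent values $t_0=\mu/(1-\mu)$ and $X=e^{-t_0}/(1-\mu)$, and the final value $1+2e^{-2t_0}$ all check out.

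However, in the range $0<\mu<1$ --- which you correctly identify as the crux --- what you present is a stationarity computation, not a maximization, and here the gap is genuine rather than cosmetic. Two points. First, the inner step can be made rigorous without any Euler--Lagrange theory for nonlocal functionals: fix $X=\int_0^\infty e^{-t}\cos\theta\,dt=x$, introduce a multiplier $\lambda$, and observe that for each $t$ the integrand $e^{-2t}\bigl(1-c^2\bigr)+\lambda e^{-t}c$ is concave in $c\in[-1,1]$, so the clipped profile $c(t)=\min\{1,\tfrac{\lambda}{2}e^{t}\}$ maximizes pointwise; subtracting the (equal) constraint terms gives the exact inner maximum as a function of $x$. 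Second, the outer maximization over $x\in[0,1]$ must then be carried out explicitly and compared with the endpoints: $x=1$ is the Koebe competitor with value $3-4\mu$, and since
\begin{equation*}
1+2e^{-2\mu/(1-\mu)}-(3-4\mu)=O(\mu^{3})\quad\text{as }\mu\to 0^{+},
\end{equation*}
the interior critical value beats the Koebe value only to \emph{third} order near $\mu=0$; your remark that a comparison with the best real-$\kappa$ competitor ``makes this plausible'' is therefore exactly the step that cannot be waved through --- any estimate losing an $O(\mu^2)$ term would fail to close the argument. Finally, sharpness is not automatic from your construction: the single-sign profile $\sin\theta=\sqrt{1-\cos^2\theta}$ has $Y\neq 0$ and so does \emph{not} attain the bound (it loses $4(1-\mu)Y^2$); you must flip the sign of $\sin\theta$ at a time chosen by an intermediate-value argument to force $Y=0$, which makes the driving function discontinuous, so you need either the Loewner equation with measurable (piecewise-continuous) driving or an approximation argument, rather than the classical continuous single-slit setup your density reduction invokes. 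Your $\mu\le 0$ paragraph is also garbled as written ($1-X^2\le 1-X^2+Y^2$ points the wrong way for an upper bound); the correct bookkeeping along the same lines gives $a_3-\mu a_2^2\le 3-4\mu X^{2}-4(1-\mu)Y^{2}\le 3-4\mu$, so the conclusion stands once the display is repaired.
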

\noindent The functional $|a_3-\mu a_2^2|$ is well-known as the
Fekete-Szegö functional.\\

\noindent Let $\mathcal{P}$ denote the class of all analytic functions $p$ having positive real part in $\mathbb{D}$, of the form
\begin{equation}\label{p}
p(z)=1+c_1z+c_2z^2+\cdots.
\end{equation}
A member of $\mathcal{P}$ is called a {\em Carath\'{e}odory function}. It is known that $|c_n|\le 2$ for a function $p\in \mathcal{P}$ and for all $n\ge 1$ (see \cite{Dur83}).

\medskip

To prove our results, we need the following lemma.

\medskip

\begin{lemma}\cite{SimT20}\label{lemma2}
Let $B_1$, $B_2$, and $B_3$ be numbers such that $B_1>0$, $B_2\in \mathbb{C},$ and $B_3\in \mathbb{R}$. Let $p\in \mathcal{P}$ be of the form \eqref{p}. Define $\Psi_+(c_1,c_2)$ and $\Psi_-(c_1,c_2)$ by
$$
\Psi_+(c_1,c_2)=|B_2c_1^2+B_3c_2|-|B_1c_1|,
$$
and 
$$
\Psi_-(c_1,c_2)=-\Psi_+(c_1,c_2).
$$
Then
\begin{equation}\label{B+}
\Psi_+(c_1,c_2)\le\left \{
	\begin{array}{ll}
		|4B_2+2B_3|-2B_1, & {\mbox{ if }} |2B_2+B_3|\ge |B_3|+B_1,
		\\[5mm]
		2|B_3|, & {\mbox{ otherwise}},
	\end{array}
	\right.
\end{equation}
and
\begin{equation}\label{B-}
	\Psi_-(c_1,c_2)\le\left \{
	\begin{array}{ll}
		2B_1-B_4, & {\mbox{ if }} B_1\ge B_4+2|B_3|,
		\\[5mm]
		2B_1\sqrt{\cfrac{2|B_3|}{B_4+2|B_3|}}, & {\mbox{ if }} B_1^2\le 2|B_3|(B_4+2|B_3|),\\[5mm]
		2|B_3|+\cfrac{B_1^2}{B_4+2|B_3|}, &{\mbox{ otherwise}},
	\end{array}
	\right.
\end{equation}
where $B_4=|4B_2+2B_3|$. All inequalities in \eqref{B+} and \eqref{B-} are sharp.
\end{lemma}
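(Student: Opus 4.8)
The plan is to reduce the two-variable problem to a one-variable extremal problem via the standard coefficient representation for Carathéodory functions. First I would observe that $\Psi_+$ is invariant under the rotation $p(z)\mapsto p(e^{-i\theta}z)$, which sends $c_1\mapsto e^{-i\theta}c_1$ and $c_2\mapsto e^{-2i\theta}c_2$, hence multiplies $B_2c_1^2+B_3c_2$ by a unimodular factor while leaving $|c_1|$ fixed; so I may assume $c_1=c\in[0,2]$. Then I would invoke the classical formula $2c_2=c_1^2+(4-c_1^2)\zeta$ with $|\zeta|\le 1$ to write
\[
B_2c_1^2+B_3c_2=\Bigl(B_2+\tfrac{B_3}{2}\Bigr)c^2+\tfrac{B_3}{2}(4-c^2)\zeta,
\]
recording that $|B_2+B_3/2|=B_4/4$ since $B_4=|4B_2+2B_3|=4|B_2+B_3/2|$.

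For the bound \eqref{B+} I would maximize over $\zeta$ in the closed unit disk by the triangle inequality, giving $|B_2c_1^2+B_3c_2|\le \tfrac{B_4}{4}c^2+\tfrac{|B_3|}{2}(4-c^2)$ with equality attainable. This reduces $\Psi_+$ to the quadratic $\phi(c)=\bigl(\tfrac{B_4}{4}-\tfrac{|B_3|}{2}\bigr)c^2-B_1c+2|B_3|$ on $[0,2]$. Since $B_1>0$, one checks the maximum over $[0,2]$ is attained at an endpoint, with $\phi(0)=2|B_3|$ and $\phi(2)=B_4-2B_1$; the inequality $\phi(2)\ge\phi(0)$ is precisely the stated condition $|2B_2+B_3|\ge|B_3|+B_1$. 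This yields \eqref{B+}.

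For the bound \eqref{B-} I would instead minimize $|B_2c_1^2+B_3c_2|$ over $\zeta$: the image set is a disk, so the minimum modulus is $\max\{0,\tfrac{B_4}{4}c^2-\tfrac{|B_3|}{2}(4-c^2)\}$. Thus $\Psi_-$ reduces to maximizing $g(c)=B_1c-\max\{0,h(c)\}$ on $[0,2]$, where $h(c)=\tfrac{B_4+2|B_3|}{4}c^2-2|B_3|$ vanishes at $c_0=\sqrt{8|B_3|/(B_4+2|B_3|)}$. On $[0,c_0]$ the function equals $B_1c$, and on $[c_0,2]$ it is the concave parabola $-\tfrac{B_4+2|B_3|}{4}c^2+B_1c+2|B_3|$ with vertex at $c^{*}=2B_1/(B_4+2|B_3|)$. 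Comparing $c^{*}$ with $c_0$ and with the endpoint $2$ produces exactly the three cases: $c^{*}\ge 2$ (equivalently $B_1\ge B_4+2|B_3|$) gives the value $2B_1-B_4$ at $c=2$; $c^{*}\le c_0$ (equivalently $B_1^2\le 2|B_3|(B_4+2|B_3|)$) gives $g(c_0)=2B_1\sqrt{2|B_3|/(B_4+2|B_3|)}$; and the intermediate case gives the vertex value $2|B_3|+B_1^2/(B_4+2|B_3|)$.

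Finally I would establish sharpness by exhibiting, in each case, a Carathéodory function realizing the optimal pair $(c,\zeta)$: the choice $c=2$ corresponds to $p(z)=(1+z)/(1-z)$ up to rotation, $c=0$ to $p(z)=(1+z^2)/(1-z^2)$, and the interior cases to functions whose first two coefficients are prescribed by the optimal $(c,\zeta)$, whose existence is guaranteed by the representation formula. The main obstacle I anticipate is the bookkeeping in the lower-bound case analysis — correctly matching the geometric thresholds $c^{*}\lessgtr c_0,2$ to the algebraic conditions in \eqref{B-}, and verifying that the piecewise function $g$ admits no larger value at a hidden interior point — rather than any conceptual difficulty.
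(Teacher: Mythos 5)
Your proposal is correct: the rotation reduction to $c_1=c\in[0,2]$, the representation $2c_2=c_1^2+(4-c_1^2)\zeta$ with $|\zeta|\le 1$, and the endpoint/vertex analysis of the resulting quadratics reproduce exactly the stated bounds, with your thresholds $\phi(2)\ge\phi(0)$, $c^*\ge 2$, and $c^*\le c_0$ matching the conditions in \eqref{B+} and \eqref{B-}. Note that the present paper gives no proof at all — Lemma \ref{lemma2} is quoted from Sim and Thomas \cite{SimT20} — and your argument is essentially the standard one used in that reference, so there is nothing to bridge beyond the minor degenerate check that $B_4+2|B_3|=0$ forces $B_2=B_3=0$, which falls under the first case of \eqref{B-}.
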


\medskip

Our main aim of this paper is to estimate the sharp Lower and upper  bounds of  $|\Gamma_2|-|\Gamma_1|$ for functions $f$ belong to $\mathcal{S}$, $\mathcal{S}^*$, $\mathcal{C}$, $\mathcal{S}^*_{\alpha}$, $\mathcal{C}_{\alpha}$, $\mathcal{S}^*(\alpha)$, $\mathcal{C}(\alpha)$, $\mathcal{G}(\nu)$,  $\mathcal{F}_0(\lambda)$,  $\mathcal{S}^*_{\gamma}(\alpha)$ and $\mathcal{C}_{\gamma}(\alpha)$.

\medskip

 \section{Main Results}
 We now state our first main result which provides sharp bounds for $|\Gamma_2|-|\Gamma_1|$ when $f$ belongs to the class $\mathcal{S}$.
 \begin{theorem}\label{classS}
 Let $f \in \mathcal{S}$ of the form \eqref{S}, then 
 $$
 |\Gamma_2|-|\Gamma_1| \leq \cfrac{1}{2}
 $$
 and 
 \begin{equation*}\label{BBB}
|\Gamma_2|-|\Gamma_1| \geq \left \{
	\begin{array}{ll}
		-\cfrac{1}{2} & {\mbox{ if }} |a_2|\leq 1,
		\\[3mm]
		-0.6353..., & {\mbox{ if }} |a_2|>1.
	\end{array}
	\right.
\end{equation*}
 \end{theorem}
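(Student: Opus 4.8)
My plan is to translate the problem into an estimate of a Fekete--Szeg\"o functional. By \eqref{Gammaaa} we have $\Gamma_1=-\tfrac12 a_2$ and $\Gamma_2=-\tfrac12\bigl(a_3-\tfrac32 a_2^2\bigr)$, so that $|\Gamma_1|=\tfrac12|a_2|$, $|\Gamma_2|=\tfrac12\bigl|a_3-\tfrac32 a_2^2\bigr|$, and hence
\[
|\Gamma_2|-|\Gamma_1|=\tfrac12\Bigl(\bigl|a_3-\tfrac32 a_2^2\bigr|-|a_2|\Bigr).
\]
Thus the upper estimate is equivalent to $\bigl|a_3-\tfrac32 a_2^2\bigr|\le 1+|a_2|$, and the lower estimates amount to bounding the same expression from below. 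I would stress at the outset that the stated Fekete--Szeg\"o theorem alone (the case $\mu=\tfrac32\ge1$) only gives the constant bound $\bigl|a_3-\tfrac32 a_2^2\bigr|\le 3$, which recovers $\tfrac12$ only at the single point $|a_2|=2$; a genuinely $|a_2|$-dependent inequality is required.

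For the upper bound I would split off the combination $a_3-a_2^2$ and use the classical inequality $|a_3-a_2^2|\le 1$, valid for every $f\in\mathcal{S}$ (the simplest Grunsky inequality, equivalently a consequence of Loewner's parametric method; see \cite{Dur83}), together with $|a_2|\le 2$. Writing $a_3-\tfrac32 a_2^2=(a_3-a_2^2)-\tfrac12 a_2^2$ and applying the triangle inequality gives
\[
\bigl|a_3-\tfrac32 a_2^2\bigr|\le |a_3-a_2^2|+\tfrac12|a_2|^2\le 1+\tfrac12|a_2|^2,
\]
whence $|\Gamma_2|-|\Gamma_1|\le \tfrac12+\tfrac14|a_2|(|a_2|-2)\le\tfrac12$, the last step using $0\le|a_2|\le 2$. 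Equality forces $|a_2|=2$ (the Koebe function) or $|a_2|=0$ with $|a_3|=1$ (e.g.\ $z/(1-z^2)$), which settles sharpness of the upper bound.

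For the lower bound I would argue by cases on $|a_2|$. If $|a_2|\le 1$, then trivially $|\Gamma_2|-|\Gamma_1|\ge -|\Gamma_1|=-\tfrac12|a_2|\ge-\tfrac12$. If $|a_2|>1$, I would instead write $a_3-\tfrac32 a_2^2=\bigl(a_3-\tfrac12 a_2^2\bigr)-a_2^2$ and combine the reverse triangle inequality with the Fekete--Szeg\"o theorem in the case $\mu=\tfrac12$, for which $1+2e^{-2\mu/(1-\mu)}=1+2e^{-2}$:
\[
\bigl|a_3-\tfrac32 a_2^2\bigr|\ge |a_2|^2-\bigl|a_3-\tfrac12 a_2^2\bigr|\ge |a_2|^2-\bigl(1+2e^{-2}\bigr).
\]
This yields $|\Gamma_2|-|\Gamma_1|\ge \tfrac12\bigl(|a_2|^2-|a_2|\bigr)-\bigl(\tfrac12+e^{-2}\bigr)\ge-\bigl(\tfrac12+e^{-2}\bigr)$, since $|a_2|^2-|a_2|\ge0$ when $|a_2|\ge1$; numerically $\tfrac12+e^{-2}=0.6353\ldots$, the asserted constant.

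The main obstacle, I expect, is not the chain of inequalities but the verification of sharpness of the lower bound in the regime $|a_2|>1$: the derivation above combines a reverse triangle inequality with the \emph{global} Fekete--Szeg\"o bound, and one must exhibit (or approximate by) an extremal function realising these simultaneously, something that is most naturally done through the Loewner parametric representation of the pair $(a_2,a_3)$ rather than through the elementary estimates. I would therefore treat the displayed inequalities as the routine part and isolate the sharpness claims as the technical heart of the argument, falling back on the parametric description of the coefficient body to produce the required extremal (or limiting) configurations for the two lower-bound regimes.
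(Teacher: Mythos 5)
Your proof of the two displayed inequalities is correct and essentially identical to the paper's: the same reduction via \eqref{Gammaaa} to $\tfrac12\bigl(|a_3-\tfrac32 a_2^2|-|a_2|\bigr)$, the same splitting $a_3-\tfrac32 a_2^2=(a_3-a_2^2)-\tfrac12 a_2^2$ combined with $|a_3-a_2^2|\le 1$ and $|a_2|\le 2$ for the upper bound, and, when $|a_2|>1$, the same passage from $|a_2|$ to $|a_2|^2$ followed by the Fekete--Szeg\"o theorem with $\mu=\tfrac12$, producing $\tfrac12+e^{-2}=0.6353\ldots$. Since the statement itself asserts only the inequalities, your proposal proves it in full.

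The one divergence is your treatment of sharpness, and there your caution is vindicated rather than being a defect. The paper claims the bound $-\tfrac12$ is attained by $f(z)=z/(1+z+z^2)$; but that function has $a_2=-1$, $a_3=0$, hence $\Gamma_1=\tfrac12$, $\Gamma_2=\tfrac34$ and $|\Gamma_2|-|\Gamma_1|=\tfrac14$, so the paper's exhibited extremal does not work. Equality at $-\tfrac12$ forces $|a_2|=1$ and $\Gamma_2=0$, i.e.\ $a_3=\tfrac32 a_2^2$, and such an $f\in\mathcal{S}$ is produced precisely by the Loewner-parametric route you sketch: with the classical representations $a_2=-2\int_0^\infty e^{-t}\kappa(t)\,dt$ and $a_3=a_2^2-2\int_0^\infty e^{-2t}\kappa(t)^2\,dt$, take $\kappa$ two-valued with values $e^{\pm 2\pi i/3}$, say $\kappa=e^{2\pi i/3}$ for $s=e^{-t}\in(\tfrac14,\tfrac34)$ and $\kappa=e^{-2\pi i/3}$ otherwise; then $\int_0^1\kappa\,ds=-\tfrac12$ and $\int_0^1 s\,\kappa^2\,ds=-\tfrac14$, giving $a_2=1$, $a_3=\tfrac32$ and $|\Gamma_2|-|\Gamma_1|=-\tfrac12$. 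Finally, for $|a_2|>1$ the paper explicitly declares the sharpness of $-0.6353\ldots$ an open problem, which matches your identification of that regime as the genuine obstacle; you are not expected to close it, and the paper does not either.
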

 \begin{proof}
 Let $f \in \mathcal{S}$ of the form \eqref{S}. Then by \eqref{Gammaaa}, we have
\begin{equation}\label{ga}
|\Gamma_2|-|\Gamma_1|=\cfrac{1}{2}\left(|a_3-\cfrac{3}{2}a_2^2|-|a_2|\right).
\end{equation}
From the Bieberbach conjecture, for all $f \in \mathcal{S}$, we have $\cfrac{1}{2}|a_2| \leq 1$, and so by \eqref{ga}

\begin{equation}
\begin{aligned}
2(|\Gamma_2|-|\Gamma_1|)&\leq \left|a_3-\cfrac{3}{2}a_2^2\right|-\cfrac{1}{2}\left|a_2^2\right|\\[2mm]
&\leq \left| a_3-\cfrac{3}{2}a_2^2+\cfrac{1}{2}a_2^2 \right| \\[2mm]
&\leq \left| a_3-a_2^2\right|\\[2mm]
&\leq 1,
\end{aligned}
\end{equation}
and hence we have
$$
|\Gamma_2|-|\Gamma_1| \leq \cfrac{1}{2}
$$

\noindent Now we obtain the lower bound for $|\Gamma_2|-|\Gamma_1|$.
We have 
$$
|\Gamma_1|-|\Gamma_2| = \left(\cfrac{1}{2}\left|a_2\right|-\left|a_3-\cfrac{3}{2}a_2^2\right|\right).
$$
If $|a_2|\leq 1$, then clearly $|\Gamma_1|-|\Gamma_2| \leq 1/2$. If $|a_2|> 1$, then
\begin{equation}\label{fek}
\begin{aligned}
2(|\Gamma_1|-|\Gamma_2|) &= \left|a_2\right|-\left|a_3-\cfrac{3}{2}a_2^2\right|\\
&\leq \left|a_2\right|^2-\left|a_3-\cfrac{3}{2}a_2^2\right|\\
&\leq \left| a_3-\cfrac{3}{2}a_2^2+a_2^2\right|\\
&=\left| a_3-\frac{1}{2}a_2^2\right|
\end{aligned}
\end{equation}
By taking $\mu=1/2$ in Fekete-Szegö Theorem and using \eqref{fek}, we obtain 
$$
|\Gamma_1|-|\Gamma_2|\leq  \cfrac{1+2e^{-2}}{2}=0.6353...
$$
For the upper bound, equality holds for the Koebe function $k(z)=z/(1-z)^2$. For the lower bound, when $|a_2|\leq 1$, equality holds for the function
$$
f(z)=\cfrac{z}{1+z+z^2}.
$$
For $|a_2| > 1$, whether the lower bound $-0.6353...$ is
the best possible is an open problem.

 \end{proof}
\noindent Next, we obtain the sharp lower and upper bounds for $|\Gamma_2|-|\Gamma_1|$ when $f$ belongs to the class  $\mathcal{S}^*_{\alpha}$.
\begin{theorem}\label{thmm}
Let $\alpha \in (0,1]$. If $f\in\mathcal{S}^*_{\alpha}$ given by \eqref{S}, then the following sharp inequalities holds.
\begin{equation}\label{thm2}
-\cfrac{\alpha}{\sqrt{1+3\alpha}}\leq |\Gamma_2|-|\Gamma_1| \leq \cfrac{\alpha}{2}.
\end{equation}

\end{theorem}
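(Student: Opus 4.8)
The plan is to recast $|\Gamma_2|-|\Gamma_1|$ as the functional treated in Lemma~\ref{lemma2} and read off the two bounds. Exactly as in \eqref{ga}, the relations \eqref{Gammaaa} give $|\Gamma_2|-|\Gamma_1|=\tfrac12\big(|a_3-\tfrac32 a_2^2|-|a_2|\big)$, so everything reduces to expressing $a_2$ and $a_3$ through a Carath\'{e}odory function. Since $f\in\mathcal S^*_\alpha$ the quotient $zf'(z)/f(z)$ takes the value $1$ at the origin and maps $\mathbb D$ into the sector $|\arg w|<\pi\alpha/2$, so I would write $zf'(z)/f(z)=(p(z))^{\alpha}$ with $p\in\mathcal P$ of the form \eqref{p}, the power being the principal branch.

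Expanding $zf'(z)/f(z)=1+a_2z+(2a_3-a_2^2)z^2+\cdots$ and $(p(z))^\alpha=1+\alpha c_1 z+\big(\alpha c_2+\tfrac{\alpha(\alpha-1)}{2}c_1^2\big)z^2+\cdots$ and comparing coefficients, I would obtain $a_2=\alpha c_1$ and $a_3=\tfrac{\alpha}{2}c_2+\tfrac{\alpha(3\alpha-1)}{4}c_1^2$, whence
\[
a_3-\frac32 a_2^2=\frac{\alpha}{2}c_2-\frac{\alpha(3\alpha+1)}{4}c_1^2 .
\]
Therefore $2\big(|\Gamma_2|-|\Gamma_1|\big)=\Psi_+(c_1,c_2)$ in the notation of Lemma~\ref{lemma2}, with $B_1=\alpha$, $B_2=-\tfrac{\alpha(3\alpha+1)}{4}$ and $B_3=\tfrac{\alpha}{2}$, and correspondingly $2\big(|\Gamma_1|-|\Gamma_2|\big)=\Psi_-(c_1,c_2)$.

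For the upper bound I would evaluate $|2B_2+B_3|=\tfrac32\alpha^2$ and $|B_3|+B_1=\tfrac32\alpha$; since $0<\alpha\le1$ the first-branch condition of \eqref{B+} holds only at $\alpha=1$, so the operative estimate is the second branch $\Psi_+\le 2|B_3|=\alpha$, giving $|\Gamma_2|-|\Gamma_1|\le\alpha/2$ (the two branches coincide at $\alpha=1$). For the lower bound I would set $B_4=|4B_2+2B_3|=3\alpha^2$; the first branch of \eqref{B-} asks for $\alpha\ge 3\alpha^2+\alpha$, which never holds, whereas its middle-branch condition $\alpha^2\le 2\cdot\tfrac{\alpha}{2}(3\alpha^2+\alpha)$ holds for every $\alpha>0$. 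This yields $\Psi_-\le 2B_1\sqrt{2|B_3|/(B_4+2|B_3|)}=2\alpha/\sqrt{1+3\alpha}$, i.e. $|\Gamma_2|-|\Gamma_1|\ge-\alpha/\sqrt{1+3\alpha}$.

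It remains to check sharpness, which would follow from the sharpness asserted in Lemma~\ref{lemma2}: for each bound one selects the extremal $p\in\mathcal P$ realizing equality in the relevant branch and recovers $f$ by integrating $f'(z)/f(z)=(p(z))^\alpha/z$. For the upper bound the choice $c_1=0,\ c_2=2$, i.e. $p(z)=(1+z^2)/(1-z^2)$, already forces $\Psi_+=\alpha$; the lower bound uses the extremal $p$ attached to the middle branch of \eqref{B-}. I expect the genuine difficulty to be two-fold: carrying out cleanly the $\alpha$-dependent case analysis so that exactly the second branch of \eqref{B+} and the middle branch of \eqref{B-} govern the whole range $0<\alpha\le1$, and producing an explicit extremal $f\in\mathcal S^*_\alpha$ for the lower bound together with a verification that it indeed belongs to the class — that final construction being the least mechanical part of the argument.
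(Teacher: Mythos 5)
Your proposal is correct and follows essentially the same route as the paper: the representation $zf'(z)/f(z)=(p(z))^{\alpha}$, the identical coefficient formulas, and Lemma~\ref{lemma2} applied with $B_1,B_2,B_3$ that are just the paper's choices $B_1=2$, $B_2=-\tfrac12(1+3\alpha)$, $B_3=1$ rescaled by $\alpha/2$, with the same branch analysis (second branch of \eqref{B+}, middle branch of \eqref{B-}) and the same extremal $p(z)=(1+z^2)/(1-z^2)$ for the upper bound. The only detail the paper adds beyond your appeal to the lemma's sharpness is the explicit lower-bound extremal $p(z)=(1+2Az+z^2)/(1-z^2)$ with $A=1/\sqrt{1+3\alpha}$; membership of the recovered $f$ in $\mathcal{S}^*_{\alpha}$ is automatic from the representation, so the difficulty you flag there is not an actual gap.
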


\begin{proof}
Fix $\alpha \in (0,1]$ and let $f\in \mathcal{S}^*_{\alpha}$ be of the form $(1.1)$. Then by \eqref{stardef}, we have
\begin{equation}\label{3.3.1}
\cfrac{zf'(z)}{f(z)}=(p(z))^\alpha
\end{equation}
for some $p \in \mathcal{P}$ of the form \eqref{p}. By comparing the coefficient of powers of $z$ on both sides of \eqref{3.3.1}, we obtain
	\begin{equation}\label{3.3.2}
		 a_2=\alpha c_1 \,\,\, \text{and}\,\,\,  a_3=\frac{\alpha}{4}(2c_2+(3\alpha-1)c_1^2).
	\end{equation}
\noindent Now using \eqref{Gammaaa} together with \eqref{3.3.2}, we have 
\begin{equation}\label{lambda1}
|\Gamma_2|-|\Gamma_1| = \cfrac{\alpha}{4}\left(|B_3 c_2+B_2 c
_1^2|-|B_1 c_1|\right)= \cfrac{\alpha}{4}\,\Psi_+(c_1,c_2),
\end{equation}
where  
$$
	B_1:=2, B_2:=-\frac{1}{2}(1+3\alpha), \mbox{ and } B_3:=1.
$$	
\noindent For the upper bound, we see that the condition $|2B_2+B_3| \leq |B_3|+B_1$ is equivalent
to $3 \alpha \leq 3$, which is true only if $\alpha=1$. Therefore, by Lemma \ref{lemma2},	we have
$$
|\Gamma_2|-|\Gamma_1| \leq \cfrac{\alpha}{2}.
$$
\noindent Equality holds for the function $f \in \mathcal{A}$ given by \eqref{3.3.1}, where
$p(z)=(1+z^2)/(1-z^2)$. Then $c_1=0$ and $c_2=2$, so by \eqref{3.3.2}, $a_2=0$ and $a_3=\alpha$, and therefore by \eqref{Gammaaa},
$$|\Gamma_2|-|\Gamma_1|=\cfrac{\alpha}{2}.$$
\noindent For the lower bound, since $B_4=|4B_2+2B_3|=6\alpha$, it is easy to see that the inequality $B_1 \geq B_4+2|B_3|$ doesnot holds and the inequality $B_1^2 \leq 2|B_3|(B_4+2|B_3|)$ holds for $0 < \alpha \leq 1$. Hence by Lemma \ref{lemma2}, we obtain
\begin{equation*}
|\Gamma_2|-|\Gamma_1| \geq -\cfrac{\alpha}{\sqrt{1+3\alpha}}\,.
\end{equation*}
\noindent We finally show that the left hand side inequality in \eqref{thm2} is sharp. Equality holds
for the function $f \in \mathcal{A}$ given by \eqref{3.3.1} with
$$
p(z)=\cfrac{1+2Az+z^2}{1-z^2}, \quad z \in \mathbb{D},
$$
where $A=1/\sqrt{1+3\alpha}$, which completes the proof of the theorem.
	\end{proof} 
If we put $\alpha=0$ in Theorem \ref{thmm}, then we obtain the following result for the class of starlike functions.

\begin{corollary}
	For every $f\in \mathcal{S}^*$ of the form \eqref{S}, we have
	\begin{equation}\label{C}
	-\cfrac{1}{2}\le	|\Gamma_2|-|\Gamma_1|\le \cfrac{1}{2
	}.
	\end{equation}
Both inequalities are sharp.
\end{corollary}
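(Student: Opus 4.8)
The plan is to obtain this corollary as the boundary case $\alpha = 1$ of Theorem \ref{thmm}, rather than through a genuinely new argument. The key observation is that the strongly starlike class $\mathcal{S}^*_{\alpha}$ specializes to the ordinary starlike class $\mathcal{S}^*$ precisely when $\alpha = 1$: setting $\alpha = 1$ in the defining condition \eqref{stardef} gives $|\arg(zf'(z)/f(z))| < \pi/2$, which is exactly the requirement ${\rm Re}(zf'(z)/f(z)) > 0$ characterizing $\mathcal{S}^*$. (It is $\alpha = 1$, and not $\alpha = 0$, that recovers $\mathcal{S}^*$ and produces the claimed bounds; at $\alpha = 0$ the strongly starlike condition degenerates, so the value stated in the surrounding text should be read as $\alpha = 1$.) Thus every $f \in \mathcal{S}^*$ satisfies the hypotheses of Theorem \ref{thmm} with $\alpha = 1$.

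With this identification, the corollary follows by direct substitution into the sharp inequality \eqref{thm2}. The upper bound $\alpha/2$ becomes $1/2$, while the lower bound $-\alpha/\sqrt{1+3\alpha}$ becomes $-1/\sqrt{4} = -1/2$, yielding the two-sided estimate \eqref{C} at once.

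For sharpness, I would specialize the extremal functions already exhibited in the proof of Theorem \ref{thmm} to $\alpha = 1$. The upper bound is attained by the function $f$ defined through \eqref{3.3.1} with $p(z) = (1+z^2)/(1-z^2)$: this gives $c_1 = 0$ and $c_2 = 2$, hence by \eqref{3.3.2} one has $a_2 = 0$ and $a_3 = 1$, so that $\Gamma_1 = 0$ and $|\Gamma_2| = 1/2$. The lower bound is attained by taking $p(z) = (1+2Az+z^2)/(1-z^2)$ with $A = 1/\sqrt{1+3\alpha} = 1/2$ at $\alpha = 1$, that is $p(z) = (1+z+z^2)/(1-z^2)$.

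The only point requiring care — and the sole subtlety, since no real computation is involved — is that these extremal functions genuinely lie in $\mathcal{S}^*$. This is immediate from the identification above: being extremal in $\mathcal{S}^*_1 = \mathcal{S}^*$, they are starlike by construction, so both bounds in \eqref{C} are attained within the class and are therefore sharp.
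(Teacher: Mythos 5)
Your proposal is correct and takes essentially the same route as the paper, which obtains the corollary by specializing Theorem \ref{thmm}; you rightly observe that the correct parameter value is $\alpha=1$ (so that $\mathcal{S}^*_1=\mathcal{S}^*$, giving $\alpha/2=1/2$ and $\alpha/\sqrt{1+3\alpha}=1/\sqrt{4}=1/2$), whereas the paper's text misstates this as $\alpha=0$, which would trivialize both bounds in \eqref{thm2}. Your sharpness verification via the extremal functions of Theorem \ref{thmm} at $\alpha=1$ likewise matches the paper's implicit argument.
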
	
\noindent Next, we obtain the sharp lower and upper bounds for $|\Gamma_2|-|\Gamma_1|$ when $f$ belongs to the class  $\mathcal{C}_{\alpha}$.	
	
\begin{theorem}
Let $\alpha \in (0,1]$. If $f\in\mathcal{C}_{\alpha}$ given by \eqref{S}, then the following sharp inequalities holds;
\begin{equation}\label{thm1}
|\Gamma_2|-|\Gamma_1| \leq \frac{\alpha}{12}
\end{equation}
and 
\begin{equation}\label{lll}
|\Gamma_2|-|\Gamma_1| \geq \left \{
	\begin{array}{lll}
		  -\cfrac{1}{4}\alpha(2-\alpha)  & {\mbox{ if }} \,\,  0 < \alpha \leq \cfrac{1}{3}, \\[5mm]
		 -\cfrac{\alpha}{6} \left( \cfrac{6\alpha+3}{6\alpha+14} \right)    &  {\mbox{ if }} \,\, \cfrac{1}{3} < \alpha < \cfrac{5}{6}, \\[5mm]
		 -\cfrac{\alpha}{\sqrt{4+6\alpha}} &  {\mbox{ if }} \,\, \cfrac{5}{6} \leq \alpha \leq 1.
	\end{array}
	\right.
\end{equation}
\end{theorem}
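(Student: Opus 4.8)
The plan is to follow the template of the preceding proof for $\mathcal{S}^*_{\alpha}$ (Theorem \ref{thmm}), replacing the strongly starlike representation by the strongly convex one. Fix $\alpha\in(0,1]$ and let $f\in\mathcal{C}_{\alpha}$ be of the form \eqref{S}. By the defining condition \eqref{convexdef} there is a $p\in\mathcal{P}$ of the form \eqref{p} with
\[
1+\frac{zf''(z)}{f'(z)}=(p(z))^{\alpha}.
\]
First I would expand $(p(z))^{\alpha}=1+\alpha c_1 z+\bigl(\alpha c_2+\tfrac{\alpha(\alpha-1)}{2}c_1^2\bigr)z^2+\cdots$ and match it against $1+2a_2 z+(6a_3-4a_2^2)z^2+\cdots$ to read off $a_2=\tfrac{\alpha}{2}c_1$ and $a_3=\tfrac{\alpha}{6}c_2+\tfrac{\alpha(3\alpha-1)}{12}c_1^2$. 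Substituting these into the expressions \eqref{Gammaaa} for $\Gamma_1,\Gamma_2$ and collecting terms, $|\Gamma_2|-|\Gamma_1|$ should reduce to the form $\tfrac{\alpha}{12}\,\Psi_+(c_1,c_2)$ with
\[
B_1=3,\qquad B_2=-\tfrac{1}{4}(3\alpha+2),\qquad B_3=1,
\]
which places us exactly in the setting of Lemma \ref{lemma2}.

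For the upper bound I would compute $2B_2+B_3=-\tfrac{3\alpha}{2}$, so that $|2B_2+B_3|=\tfrac{3\alpha}{2}\le\tfrac32<4=|B_3|+B_1$ for every $\alpha\in(0,1]$; hence the hypothesis of the first alternative of \eqref{B+} never holds, the second alternative applies, and $\Psi_+(c_1,c_2)\le 2|B_3|=2$. Multiplying by the prefactor $\alpha/12$ then gives the claimed upper bound, and equality is traced back through the reduction to the Carathéodory function $p(z)=(1+z^2)/(1-z^2)$ (so $c_1=0,\ c_2=2$), exactly as in the $\mathcal{S}^*_{\alpha}$ case.

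The lower bound is the substantive part, obtained from \eqref{B-} with $B_4=|4B_2+2B_3|=3\alpha$, so $B_4+2|B_3|=3\alpha+2$. The two threshold conditions in \eqref{B-} become $B_1\ge B_4+2|B_3|\Leftrightarrow 3\ge 3\alpha+2\Leftrightarrow\alpha\le\tfrac13$, and $B_1^2\le 2|B_3|(B_4+2|B_3|)\Leftrightarrow 9\le 2(3\alpha+2)\Leftrightarrow\alpha\ge\tfrac56$; these are precisely the breakpoints $\alpha=\tfrac13$ and $\alpha=\tfrac56$ appearing in the statement. Feeding $B_1=3,\ |B_3|=1,\ B_4=3\alpha$ into the three alternatives of \eqref{B-} (for instance the first gives $\Psi_-\le 2B_1-B_4=6-3\alpha$, whence $|\Gamma_2|-|\Gamma_1|\ge -\tfrac{\alpha}{12}(6-3\alpha)$ on $0<\alpha\le\tfrac13$, and the other two are analogous) and multiplying by $\alpha/12$ yields the three-part lower bound. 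The main obstacle I anticipate is the bookkeeping of this case split: one must verify that for $\tfrac13<\alpha<\tfrac56$ \emph{neither} of the first two hypotheses holds, so that the ``otherwise'' branch is the active one, and then confirm sharpness in each regime by exhibiting the extremal $p$ supplied by the sharpness part of Lemma \ref{lemma2} — the rotated Carathéodory functions of the form $p(z)=(1+2Az+z^2)/(1-z^2)$ used in Theorem \ref{thmm}, with $A$ determined from $B_1,B_4,|B_3|$, should realize equality at the endpoints and in the large-$\alpha$ regime. Matching an extremal to each branch is the only genuinely delicate step; everything else is the routine substitution outlined above.
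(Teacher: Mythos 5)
Your proposal follows the paper's proof essentially step for step: the same representation $1+zf''(z)/f'(z)=(p(z))^{\alpha}$, the same coefficient formulas $a_2=\tfrac{\alpha}{2}c_1$ and $a_3=\tfrac{\alpha}{12}\bigl(2c_2+(3\alpha-1)c_1^2\bigr)$, the same reduction to Lemma \ref{lemma2} with $B_1=3$, $B_2=-\tfrac{1}{4}(3\alpha+2)$, $B_3=1$, $B_4=3\alpha$, the same thresholds $\alpha=\tfrac13$ and $\alpha=\tfrac56$, and the same extremal functions; indeed your check that $|2B_2+B_3|=\tfrac{3\alpha}{2}<4=|B_3|+B_1$ quietly corrects a mis-stated condition in the paper's own argument.

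One caution, though: you assert without verification that your computation ``gives the claimed upper bound'' and ``yields the three-part lower bound,'' and in fact it does not reproduce the statement as printed — it reproduces what the paper's \emph{proof} derives, which differs from the theorem by two typos. The ``otherwise'' branch of \eqref{B+} gives $\Psi_+\le 2|B_3|=2$, hence $|\Gamma_2|-|\Gamma_1|\le \alpha/6$, not the printed $\alpha/12$ of \eqref{thm1}; your own extremal $p(z)=(1+z^2)/(1-z^2)$ gives $a_2=0$, $a_3=\alpha/3$ and hence $|\Gamma_2|-|\Gamma_1|=\alpha/6$, which even refutes the printed bound. Similarly, the middle branch of \eqref{B-} yields $\tfrac{\alpha}{12}\left(2+\tfrac{9}{3\alpha+2}\right)=\tfrac{\alpha(6\alpha+13)}{12(3\alpha+2)}$, which matches the neighbouring branches continuously at $\alpha=\tfrac13$ (value $\tfrac{5}{36}$) and at $\alpha=\tfrac56$ (value $\tfrac{5}{18}$), whereas the printed $\tfrac{\alpha}{6}\left(\tfrac{6\alpha+3}{6\alpha+14}\right)$ in \eqref{lll} does not. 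Had you carried out the two ``analogous'' branch computations you deferred, you would have caught both discrepancies; as written, your proof is mathematically sound but its final matching claims are unchecked and, taken literally against the printed statement, false.
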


\begin{proof}
Fix $\alpha \in (0,1]$ and let $f\in \mathcal{K}_{\alpha}$ be of the form \eqref{S}. Then by \eqref{convexdef},
\begin{equation}\label{3.1.1}
1+\cfrac{zf''(z)}{f'(z)}=(p(z))^\alpha
\end{equation}
for some $p \in \mathcal{P}$ of the form \eqref{p}. By comparing the coefficients of powers of $z$ on both the sides of \eqref{3.1.1}, we get
	\begin{equation}\label{3.12}
a_2=\cfrac{1}{2}\,\alpha c_1 \,\,\, \text{and} \,\,\, a_3=\frac{\alpha}{12}(2c_2+(3\alpha-1)c_1^2).
	\end{equation}
Now using \eqref{Gammaaa} together with \eqref{3.12}, we obtain
\begin{equation*}
\begin{aligned}
|\Gamma_2|-|\Gamma_1| &= \cfrac{1}{48}\alpha|4c_2-(2+3\alpha)c_1^2|-\cfrac{1}{4}\alpha |c_1| \\[2mm]
&=\cfrac{\alpha}{12}\left(|B_3 c_2+B_2 c_1^2|-|B_1 c_1|\right)= \cfrac{\alpha}{12} \, \Psi_+(c_1,c_2) \\[2mm]
\end{aligned}
\end{equation*}
where  
$$
B_1:=3, B_2:=-\frac{1}{4}(2+3\alpha), \mbox{ and } B_3:=1.
$$	
It is easy to see that the condition $|2B_2+B_3| > |B_3|+B_1$ holds for $0 < \alpha \leq 1$, Therefore by Lemma \ref{lemma2}, we get 	
$$
	|\Gamma_2|-|\Gamma_1| \leq \cfrac{\alpha}{6}.
$$
Equality holds for the function $f$ is of the form \eqref{3.1.1}, with 
$$p(z)=\cfrac{1+z^2}{1-z^2}, \quad z \in \mathbb{D}.$$	
\noindent We now consider the lower bound. Then
\begin{equation}\label{ppppp}
|\Gamma_1|-|\Gamma_2|=\cfrac{\alpha}{12} \, \Psi_-(c_1,c_2),
\end{equation}
where $\Psi_-(c_1,c_2)=-\Psi_+(c_1,c_2)$. Since $B_4=3\alpha$, the inequality $B_1 \geq B_4+2|B_3|$ holds for $0< \alpha \leq 1/3$. Observe that $2|B_3|(B_4+2|B_3|)-B_1^2=6\alpha-5 \geq 0$ is true for $5/6 \leq \alpha \leq 1$. Hence, Lemma \ref{lemma2} gives
\begin{equation}\label{oo}
\Psi_-(c_1,c_2) \leq \left \{
	\begin{array}{lll}
		  \cfrac{1}{4}\alpha(2-\alpha)  & {\mbox{ if }} \,\,  0 < \alpha \leq \cfrac{1}{3},
		\\[4mm]
		   \cfrac{\alpha}{6} \left( \cfrac{6\alpha+3}{6\alpha+14} \right)    &  {\mbox{ if }} \,\, \cfrac{1}{3} < \alpha < \cfrac{5}{6}, \\[4mm]
		 \cfrac{\alpha}{\sqrt{4+6\alpha}} &  {\mbox{ if }} \,\, \cfrac{5}{6} \leq \alpha \leq 1.
	\end{array}
	\right.
\end{equation}
From \eqref{ppppp} and \eqref{oo}, we obtain the inequality in \eqref{lll} as required.\\

\noindent We finally show that the inequalities in \eqref{lll} are sharp. When $\alpha \in (0,1/3]$, equality holds for the function $f \in \mathcal{A}$ given by \eqref{3.1.1} with $p(z)=1+z/1-z$. In this case $a_2=\alpha$ and $a_3=\alpha^2$ and so by \eqref{Gammaaa}, $\Gamma_1=-\alpha/2$  and $\Gamma_2=-\alpha^2/4$. When $\alpha \in (1/3,5/6)$, equality holds for the function $f \in \mathcal{A}$ given by \eqref{3.1.1} with 
$$
p(z)=\cfrac{1+2Bz+z^2}{1-z^2},
$$
where $B=3/(3+2\alpha)$.  When $ \alpha \in (5/6,1]$, equality holds for the function $f \in \mathcal{A}$ given by \eqref{3.1.1} with 
$$
p(z)=\cfrac{1+2Cz+z^2}{1-z^2},
$$
where $C=3/\sqrt{3+2\alpha}$. This completes the proof of the theorem. 
\end{proof}	
	
If we put $\alpha=0$ in Theorem \ref{thm1}, then we obtain the following result for the class of convex functions.

\begin{corollary}
	For every $f\in \mathcal{C}$ of the form \eqref{S}, we have
	\begin{equation}\label{C}
	-\cfrac{1}{\sqrt{10}}\le	|\Gamma_2|-|\Gamma_1|\le \cfrac{1}{6}.
	\end{equation}
Both inequalities are sharp.
\end{corollary}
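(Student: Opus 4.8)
The plan is to deduce this corollary as the endpoint case $\alpha=1$ of the preceding theorem on the class $\mathcal{C}_\alpha$ of strongly convex functions. Putting $\alpha=1$ in the defining relation \eqref{convexdef} turns the condition $|\arg(1+zf''(z)/f'(z))|<\pi\alpha/2$ into $\mathrm{Re}\,(1+zf''(z)/f'(z))>0$, so that $\mathcal{C}_1=\mathcal{C}$; thus the value to substitute is $\alpha=1$ (not $\alpha=0$). Since $1\in(0,1]$ lies in the admissible range, the theorem applies directly: its upper bound $\alpha/6$ becomes $1/6$, and, because $\alpha=1$ belongs to the third branch $5/6\le\alpha\le1$ of the piecewise lower estimate, the lower bound $-\alpha/\sqrt{4+6\alpha}$ becomes $-1/\sqrt{10}$. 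This yields both inequalities at once.

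To make the argument self-contained I would rerun it at $\alpha=1$. Writing $1+zf''(z)/f'(z)=p(z)$ with $p\in\mathcal{P}$ of the form \eqref{p} and matching coefficients gives $a_2=\tfrac12c_1$ and $a_3=\tfrac16(c_1^2+c_2)$. Feeding these into \eqref{Gammaaa} yields $\Gamma_1=-\tfrac14c_1$ and $\Gamma_2=-\tfrac{1}{12}c_2+\tfrac{5}{48}c_1^2$, whence
\[
|\Gamma_2|-|\Gamma_1|=\frac{1}{12}\,\Psi_+(c_1,c_2),\qquad B_1=3,\quad B_2=-\tfrac54,\quad B_3=1 .
\]
Everything then reduces to selecting the correct branches in Lemma \ref{lemma2}.

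For the upper bound, since $|2B_2+B_3|=\tfrac32<4=|B_3|+B_1$, the second (``otherwise'') branch of \eqref{B+} applies and gives $\Psi_+\le2|B_3|=2$, hence $|\Gamma_2|-|\Gamma_1|\le\tfrac16$. For the lower bound I record $B_4=|4B_2+2B_3|=3$, observe that the first branch of \eqref{B-} fails ($B_1=3\not\ge B_4+2|B_3|=5$) while the middle branch holds ($B_1^2=9\le10=2|B_3|(B_4+2|B_3|)$), and conclude $\Psi_-\le2B_1\sqrt{2|B_3|/(B_4+2|B_3|)}=6\sqrt{2/5}$, so that $|\Gamma_1|-|\Gamma_2|\le\tfrac{1}{12}\cdot6\sqrt{2/5}=1/\sqrt{10}$.

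Sharpness is inherited from the equality cases of Lemma \ref{lemma2}: the upper bound is attained by $p(z)=(1+z^2)/(1-z^2)$ (so $c_1=0$, $c_2=2$), and the lower bound by a function $p(z)=(1+2Cz+z^2)/(1-z^2)$. The step I expect to demand the most care is fixing $C$: equality in the middle branch forces $c_1=\sqrt{8/5}$ and $c_2=2$, i.e. $C=\sqrt{2/5}$, and one must verify $|C|\le1$ so that $p$ really belongs to $\mathcal{P}$ (the value $C=3/\sqrt5>1$ obtained by naively setting $\alpha=1$ in the general extremal formula is \emph{not} a Carath\'eodory function). Checking that $\alpha=1$ sits in the middle branch---the inequality $9\le10$ being rather tight---is the only genuinely delicate point, the remaining steps being routine coefficient bookkeeping.
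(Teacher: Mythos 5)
Your proof is correct and follows essentially the same route as the paper: the corollary is the endpoint specialization of the theorem on $\mathcal{C}_{\alpha}$, whose proof uses the representation $1+zf''(z)/f'(z)=p(z)^{\alpha}$ and the same branches of Lemma \ref{lemma2} with $B_1=3$, $B_2=-\tfrac{5}{4}$, $B_3=1$ at the endpoint. In fact you have quietly repaired two slips in the paper: the stated substitution ``$\alpha=0$'' must indeed be $\alpha=1$ (as you note, $\mathcal{C}_1=\mathcal{C}$, while $\alpha=0$ lies outside the theorem's range and would give vacuous bounds), and the paper's extremal constant $C=3/\sqrt{3+2\alpha}$ equals $3/\sqrt{5}>1$ at $\alpha=1$, so its $p$ has $|c_1|>2$ and is not a Carath\'eodory function, whereas your corrected value $C=\sqrt{2/5}$ (giving $c_1=\sqrt{8/5}$, $c_2=2$, hence $\Gamma_2=0$ and $|\Gamma_1|=1/\sqrt{10}$) is the genuine extremal datum.
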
	
	
\begin{theorem}\label{1}
Let $f\in \mathcal{G}(\nu)$ of the form $(1.1)$. Then the following sharp estimate holds.
\begin{equation}
|\Gamma_2|-|\Gamma_1| \leq \frac{\nu}{12}
\end{equation}

\begin{equation}\label{inverse1}
	|\Gamma_2|-|\Gamma_1|\ge \left \{
	\begin{array}{ll}
		\cfrac{\nu}{12}\left( \cfrac{10\nu+34}{5\nu+8} \right) & {\mbox{ if }} \,\,  \cfrac{1}{5} \leq \nu \leq 1,
		\\[4mm]
		\cfrac{\nu}{\sqrt{5\nu+8}}, & {\mbox{ if }} \,\, \cfrac{1}{5} \leq \nu \leq 1.
	\end{array}
	\right.
\end{equation} 
 \end{theorem}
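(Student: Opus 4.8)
The plan is to realize $|\Gamma_2|-|\Gamma_1|$ as a fixed positive multiple of the functional $\Psi_\pm(c_1,c_2)$ of Lemma~\ref{lemma2} and then read off both bounds directly from that lemma. First I would encode the defining inequality of $\mathcal{G}(\nu)$. Since $1+zf''(z)/f'(z)$ takes the value $1$ at the origin and has real part bounded above by $1+\nu/2$, I write
\[
1+\frac{zf''(z)}{f'(z)}=1+\frac{\nu}{2}\bigl(1-p(z)\bigr),\qquad p\in\mathcal{P},
\]
for some $p$ of the form \eqref{p}, so that $zf''(z)/f'(z)=-\tfrac{\nu}{2}(c_1z+c_2z^2+\cdots)$. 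Expanding $zf''/f'=2a_2z+(6a_3-4a_2^2)z^2+\cdots$ and comparing coefficients gives $a_2=-\tfrac{\nu}{4}c_1$ and $a_3=\tfrac{\nu^2}{24}c_1^2-\tfrac{\nu}{12}c_2$. Substituting into \eqref{Gammaaa} yields $\Gamma_1=\tfrac{\nu}{8}c_1$ and $\Gamma_2=\tfrac{5\nu^2}{192}c_1^2+\tfrac{\nu}{24}c_2$.

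Next I factor out $\nu/24$ to bring the expression into the shape required by Lemma~\ref{lemma2}:
\[
|\Gamma_2|-|\Gamma_1|=\frac{\nu}{24}\Bigl(\bigl|B_3c_2+B_2c_1^2\bigr|-|B_1c_1|\Bigr)=\frac{\nu}{24}\,\Psi_+(c_1,c_2),
\]
with $B_1=3$, $B_2=\tfrac{5\nu}{8}$, $B_3=1$. For the upper bound I check the dichotomy in \eqref{B+}: here $2B_2+B_3=\tfrac{5\nu}{4}+1$ while $|B_3|+B_1=4$, so $|2B_2+B_3|\ge|B_3|+B_1$ would force $\nu\ge 12/5$, which fails on $(0,1]$. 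Hence the ``otherwise'' branch applies and $\Psi_+\le 2|B_3|=2$, giving $|\Gamma_2|-|\Gamma_1|\le\nu/12$.

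For the lower bound I pass to $\Psi_-=-\Psi_+$ and apply \eqref{B-} with $B_4=|4B_2+2B_3|=\tfrac{5\nu}{2}+2$, so that $B_4+2|B_3|=\tfrac{5\nu+8}{2}$. The first branch of \eqref{B-} needs $B_1\ge B_4+2|B_3|$, which never holds for $\nu>0$, so the decisive inequality is $B_1^2\le 2|B_3|(B_4+2|B_3|)$, i.e. $9\le 5\nu+8$, equivalently $\nu\ge 1/5$; this is exactly the threshold appearing in the statement. On the regime $\nu\ge 1/5$ I obtain $\Psi_-\le 2B_1\sqrt{2|B_3|/(B_4+2|B_3|)}=12/\sqrt{5\nu+8}$, while on the complementary regime the third branch gives $\Psi_-\le 2+\tfrac{18}{5\nu+8}=\tfrac{10\nu+34}{5\nu+8}$. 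Multiplying by $\nu/24$ then produces the two lower estimates for $|\Gamma_2|-|\Gamma_1|$.

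Finally, sharpness would be verified exactly as in the preceding theorems, by feeding extremal Carath\'eodory functions into the representation: $p(z)=(1+z^2)/(1-z^2)$ (so $c_1=0$, $c_2=2$) realizes the upper bound, while a function of the form $p(z)=(1+2Az+z^2)/(1-z^2)$, with $A$ tuned to the equality case of the relevant branch of \eqref{B-}, realizes the lower bound. The coefficient computation itself is routine once the representation is fixed; the main obstacle I anticipate is bookkeeping, namely pinning down precisely at which value of $\nu$ the branches of \eqref{B-} switch and then confirming that the extremal parameter $A$ indeed produces genuine equality rather than merely the stated numerical value.
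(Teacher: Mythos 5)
Your derivation of the two bounds coincides with the paper's proof step for step: the same representation $zf''(z)/f'(z)=\tfrac{\nu}{2}\bigl(1-p(z)\bigr)$, the same coefficients $a_2=-\tfrac{\nu}{4}c_1$ and $a_3=\tfrac{1}{24}\bigl(\nu^2c_1^2-2\nu c_2\bigr)$, the same reduction $|\Gamma_2|-|\Gamma_1|=\tfrac{\nu}{24}\,\Psi_+(c_1,c_2)$ with $B_1=3$, $B_2=\tfrac{5\nu}{8}$, $B_3=1$, and the same branch analysis of Lemma \ref{lemma2} with threshold $\nu=1/5$. In fact your $B_4=\tfrac{5\nu}{2}+2$ is the correct value (the paper's line ``$B_4=(5\nu+4)/4$'' is a misprint, as its own subsequent expressions $\tfrac{10\nu+34}{5\nu+8}$ and $\tfrac{12}{\sqrt{5\nu+8}}$ confirm), and your final constants $\tfrac{\nu}{24}\cdot\tfrac{10\nu+34}{5\nu+8}$ for $0<\nu<1/5$ and $\tfrac{\nu}{2\sqrt{5\nu+8}}$ for $1/5\le\nu\le1$ are the ones that actually follow from \eqref{A1}; the display in the theorem statement carries typographical errors in both the case conditions and the prefactors.

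The one genuine defect is your proposed extremal function for the lower bound. Here $B_2=\tfrac{5\nu}{8}>0$, in contrast with the preceding theorems where $B_2<0$, and this sign change defeats the family $p(z)=(1+2Az+z^2)/(1-z^2)$: it has $c_1=2A$, $c_2=2$, so $B_2c_1^2+B_3c_2=\tfrac{5\nu}{2}A^2+2>0$ and $\Psi_-(c_1,c_2)=6A-\tfrac{5\nu}{2}A^2-2\le 4-\tfrac{5\nu}{2}$, which is strictly below $12/\sqrt{5\nu+8}$ throughout $[1/5,1]$ (at $\nu=1/5$ one gets at most $3.5$ against the target $4$). No tuning of $A$ rescues this: equality in the middle branch of \eqref{B-} forces $B_2c_1^2+B_3c_2=0$, which with $B_2>0$ requires $c_2<0$. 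That is why the paper instead takes $p(z)=\dfrac{1-z^2}{1-2sz+z^2}$ with $s=2/\sqrt{5\nu+8}$, giving $c_1=2s$, $c_2=4s^2-2$, hence $B_2c_1^2+B_3c_2=\Bigl(\tfrac{5\nu+8}{2}\Bigr)s^2-2=0$ and $\Psi_-=6s=12/\sqrt{5\nu+8}$ exactly; the analogous choice $t=6/(5\nu+8)$ settles the branch $0<\nu<1/5$. You did flag precisely this verification as the step needing confirmation, so the repair is local, but as written your argument establishes the inequalities while leaving the sharpness claim in \eqref{inverse1} unproved.
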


\begin{proof}
Let $f\in \mathcal{G}(\nu)$. Then there exists a function $p(z)$ of the form \eqref{p} satisfying  the condition,
 \begin{equation}\label{gp(z)}
 	p(z)=\cfrac{1}{\nu} \bigg(\nu- \cfrac{2zf''(z)}{f'(z)}\bigg).
 \end{equation}
Then, on equating coefficients of powers of $z$ in \eqref{gp(z)}, we have
\begin{equation}\label{g1}
	a_2=-\cfrac{\nu c_1}{4} \,\,\, \text{and} \,\,\,
    a_3=\cfrac{\nu^2 c_1^2-2\nu c_2}{24}.
\end{equation}

\noindent Using \eqref{Gammaaa} and \eqref{g1}, we obtain
\begin{equation}\label{A1}
|\Gamma_2|-|\Gamma_1| = \cfrac{\nu}{24}\left(|B_3 c_2+B_2 c_1^2|-|B_1 C_1|\right)=\cfrac{\nu}{24}\,\,\Psi_+(c_1,c_2)
\end{equation} 
 where $$
	B_1:=3, B_2:=\frac{5}{8}\nu, \mbox{ and } B_3:=1.
	$$
	
\noindent Also note that $B_4=|4B_2+2B_3|=(5\nu+4)/4$.\\[2mm]
\noindent For the upper bound, we see that the condition $|2B_2+B_3| \leq |B_3|+B_1$ is equivalent
to $5 \nu \leq 12$, which is not true for $0< \nu \leq 1$. Therefore	
$$
	|\Gamma_2|-|\Gamma_1| \leq \cfrac{\nu}{12}\,.
$$
It is easy to see that equality holds when $f \in \mathcal{G}(\nu)$ defined by \eqref{gp(z)} with 
	$p(z)=(1+z^2)/(1-z^2)$. \\[2mm]
\noindent We now proceed to prove the lower bound for $|\Gamma_2|-|\Gamma_1|$. It is easy to show that the condition $B_1 \geq B_4 + 2|B_3|$ doesnot hold for $0\leq \nu \leq 1$, but the condition $B_1^2 \leq 2|B_3|(B_4 + 2|B_3|)$ holds for $\nu \geq 1/5$.  Thus by Lemma \ref{lemma2}, we obtain
$$
\Psi_-(c_1,c_2)\le \left \{
\begin{array}{ll}
     \cfrac{10\nu+34}{5\nu+8}, & {\mbox{ if }} 0 < \nu < 1/5,\\[4mm]
	 \cfrac{12}{\sqrt{5\nu+8}}, & {\mbox{ if }} 1/5 \leq \nu \leq 1.
\end{array}
\right.
$$\\
\noindent By substituting the above inequality in \eqref{A1}, we get the inequality in Theorem \ref{1}. \\[1mm]
For $0 < \nu < 1/5$, equality holds for the function $f_2$ is of the form \eqref{gp(z)} with 
$$
p(z)=\cfrac{1-z^2}{1-2tz+z^2},
$$
where $t=6/(5\nu+8)$. For $1/5 \leq \nu \leq 1$, equality holds for the function $f_3$ is of the form \eqref{gp(z)} with 
$$
p(z)=\cfrac{1-z^2}{1-2sz+z^2},
$$
where $s=2/(\sqrt{5\nu+8})$. This completes the proof of the theorem.
\end{proof} 

 \begin{theorem}\label{2}
Let $1/2\le \lambda\le 1$. For every $f\in \mathcal{F}_0(\lambda)$ be of the form \eqref{S}, then the following sharp inequalities holds.

\begin{equation}
-\cfrac{1+2\lambda}{2\sqrt{5+10\lambda}} \leq |\Gamma_2|-|\Gamma_1| \leq \frac{\nu}{12}.
\end{equation}

\end{theorem}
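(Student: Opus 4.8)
The plan is to follow the template already established in the proofs of Theorems \ref{thmm} and \ref{thm1}, reducing everything to a single application of Lemma \ref{lemma2}. First I would rewrite the defining condition in Carath\'eodory form: since $1+zf''(z)/f'(z)$ equals $1$ at the origin and has real part exceeding $1/2-\lambda$, there is a $p\in\mathcal{P}$ of the form \eqref{p} with
\[
1+\frac{zf''(z)}{f'(z)}=\left(\frac12-\lambda\right)+\left(\frac12+\lambda\right)p(z).
\]
Comparing the coefficients of $z$ and $z^2$ on both sides gives $a_2=\frac{1+2\lambda}{4}\,c_1$ and $a_3=\frac{1+2\lambda}{12}\,c_2+\frac{(1+2\lambda)^2}{24}\,c_1^2$.

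Next I would substitute these into \eqref{Gammaaa}. A short computation yields $\Gamma_1=-\frac{1+2\lambda}{8}\,c_1$ and $\Gamma_2=-\frac{1+2\lambda}{24}\bigl(c_2-\frac{5(1+2\lambda)}{8}\,c_1^2\bigr)$, so that
\[
|\Gamma_2|-|\Gamma_1|=\frac{1+2\lambda}{24}\left(\left|c_2-\tfrac{5(1+2\lambda)}{8}c_1^2\right|-3|c_1|\right)=\frac{1+2\lambda}{24}\,\Psi_+(c_1,c_2),
\]
where, in the notation of Lemma \ref{lemma2}, we set $B_1=3$, $B_2=-\frac{5(1+2\lambda)}{8}$ and $B_3=1$. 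This recasts both bounds as instances of \eqref{B+} and \eqref{B-}.

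For the upper bound I would determine which branch of \eqref{B+} applies. Here $2B_2+B_3=-\frac{1+10\lambda}{4}$, so $|2B_2+B_3|=\frac{1+10\lambda}{4}$ while $|B_3|+B_1=4$; the condition $|2B_2+B_3|\ge|B_3|+B_1$ reduces to $\lambda\ge 3/2$ and so fails on $[1/2,1]$. Thus the second branch gives $\Psi_+\le 2|B_3|=2$, whence $|\Gamma_2|-|\Gamma_1|\le\frac{1+2\lambda}{12}$; note this is the correct reading of the stated bound (the symbol $\nu$ in the statement is a misprint for $1+2\lambda$), and it specialises to $1/6$ at $\lambda=1/2$, matching the convex case. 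Sharpness follows from $p(z)=(1+z^2)/(1-z^2)$, for which $c_1=0$ and $c_2=2$. For the lower bound I would use \eqref{B-} with $B_4=|4B_2+2B_3|=\frac{1+10\lambda}{2}$, hence $B_4+2|B_3|=\frac{5+10\lambda}{2}$. The first branch needs $B_1\ge B_4+2|B_3|$, i.e.\ $\lambda\le 1/10$, which fails; the second needs $B_1^2\le 2|B_3|(B_4+2|B_3|)$, i.e.\ $9\le 5+10\lambda$, i.e.\ $\lambda\ge 2/5$, which holds throughout $[1/2,1]$. Therefore $\Psi_-\le 2B_1\sqrt{2|B_3|/(B_4+2|B_3|)}=12/\sqrt{5+10\lambda}$, and multiplying by $\frac{1+2\lambda}{24}$ gives $|\Gamma_2|-|\Gamma_1|\ge-\frac{1+2\lambda}{2\sqrt{5+10\lambda}}$ as claimed.

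The computations are routine; the genuine care is needed only in the case analysis for Lemma \ref{lemma2}, namely confirming that a single branch of each of \eqref{B+} and \eqref{B-} governs the whole admissible range $\lambda\in[1/2,1]$, and in pinning down the lower-bound extremal. For the latter I would take $p(z)=(1+2Az+z^2)/(1-z^2)$, which has $c_1=2A$, $c_2=2$ and lies in $\mathcal{P}$ for $|A|\le 1$; choosing $A=2/\sqrt{5+10\lambda}$ (so that $\left|c_2-\frac{5(1+2\lambda)}{8}c_1^2\right|$ vanishes) forces equality in the second branch of \eqref{B-}, and one checks $|A|\le 1$ since $4\le 5+10\lambda$ for all $\lambda\ge 0$. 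This establishes both inequalities and their sharpness.
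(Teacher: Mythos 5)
Your proof is correct and follows essentially the same route as the paper: the same Carath\'eodory representation of the class condition, the same coefficient identities leading to $B_1=3$, $B_2=-\tfrac{5}{8}(1+2\lambda)$, $B_3=1$, the same branch analysis in Lemma \ref{lemma2}, and the same extremal functions $p(z)=(1+z^2)/(1-z^2)$ and $p(z)=(1+2tz+z^2)/(1-z^2)$ with $t=2/\sqrt{5+10\lambda}$. You also correctly diagnose that the $\nu/12$ in the theorem statement is a misprint for $(1+2\lambda)/12$, which is exactly the bound the paper's own proof derives.
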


\begin{proof}

Let  $f\in \mathcal{F}_0(\lambda)$ be of the form \eqref{S}. Then we have
	\begin{equation}\label{pf}
	1+\cfrac{zf''(z)}{f'(z)}=\bigg(\cfrac{1}{2}+\lambda\bigg)p(z)+\cfrac{1}{2}-\lambda, \quad z\in \mathbb{D}.
	\end{equation}
	By comparing the coefficients of powers of $z$ on both the sides of \eqref{pf}, we obtain
\begin{equation}\label{eqa2c2}
	a_2=\cfrac{(1+2\lambda)c_1}{4} \,\,\, \text{and} \,\,\,
    a_3=\cfrac{(1+2\lambda)(2c_2+(1+2\lambda)c_1^2)}{24}.
\end{equation}

\noindent Now using \eqref{Gammaaa} together with \eqref{eqa2c2}, we have 

\begin{equation}\label{lambda1}
|\Gamma_2|-|\Gamma_1| = \cfrac{(1+2\lambda)}{24}\left(|B_3 c_2+B_2 c_1^2|-|B_1 c_1|\right)= \cfrac{(1+2\lambda)}{24}\psi_+(c_1,c_2)
\end{equation}
where  
$$
	B_1:=3, B_2:=-\frac{5}{8}(1+2\lambda), \mbox{ and } B_3:=1.
$$

\noindent Note that the inequality $|2B_2+B_3| \geq |B_3|+B_1$ is equivalent to $1+10\lambda \geq 16$, which is not true for $1/2 \leq \lambda \leq 1$. So using Lemma together with \eqref{lambda1}, we can conclude that 
$$|\Gamma_2|-|\Gamma_1| \leq \cfrac{1+2\lambda}{12},$$
and equality holds for the function $f_4$ given by
 \begin{equation}\label{f2}
 	f_2(z)=z+\cfrac{(2\lambda+1)}{6}\,z^3+\cdots
 \end{equation}
\noindent We now consider the lower bound. Then
\begin{equation}\label{lambda2}
|\Gamma_1|-|\Gamma_2| = \cfrac{(1+2\lambda)}{24} \psi_-(c_1,c_2)
\end{equation}
where $\psi_-(c_1,c_2)=-\psi_+(c_1,c_2)$. Since $B_4=(1+10\lambda)/2$, it is easy to see that the inequality $B_1\geq B_4+2|B_3|$ is not true. Further we have,
$$
2|B_3|(B_4+2|B_3|)-B_1^2=10\lambda -4 \geq 0.
$$
Hence by \eqref{lambda2} and Lemma \ref{lemma2} we get 
$$
|\Gamma_1|-|\Gamma_2| \leq \cfrac{1}{2}\left(\cfrac{1+2\lambda}{\sqrt{5+10\lambda}}\right),
$$
and equality holds for the function $f \in \mathcal{F}_0(\lambda)$ satisfying \eqref{pf} with 
$$
p(z)=\cfrac{1+2tz+z^2}{1-z^2},
$$
where $t=2/\sqrt{5+10\lambda}$. This completes the proof of the theorem.
\end{proof}

\noindent Next, we obtain the sharp lower and upper bounds for $|\Gamma_2|-|\Gamma_1|$ when $f$ belongs to the class  $\mathcal{S}^*_{\gamma}(\alpha)$.

\begin{theorem}\label{4444}
Let $-\pi/2< \gamma< \pi/2$ and $0\le \alpha <1$. For every $f\in \mathcal{S}^*_{\gamma}(\alpha)$ of the form \eqref{S}, the following sharp inequalities holds.
$$
-\cfrac{(1-\alpha)\cos \gamma}{\sqrt{|\eta|+1}} \leq|\Gamma_2|-|\Gamma_1| \leq \cfrac{(1-\alpha)\cos \gamma}{2}
$$
\noindent where $\eta=4(1-\alpha)\mu-1$ with $\mu=e^{i\gamma}\cos \gamma$.
\end{theorem}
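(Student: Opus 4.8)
The plan is to run the same machine used for the classes $\mathcal{S}^*_{\alpha}$ and $\mathcal{C}_{\alpha}$: reduce $|\Gamma_2|-|\Gamma_1|$ to the functional $\Psi_+$ of Lemma \ref{lemma2} and read off the two bounds from \eqref{B+} and \eqref{B-}. The first step is to linearize the defining condition. Since $zf'/f$ maps $\mathbb{D}$ into the half-plane $\{w:\operatorname{Re}(e^{-i\gamma}w)>\alpha\cos\gamma\}$ and equals $1$ at the origin, I would produce the affine uniformization
$$
\frac{zf'(z)}{f(z)}=1+(1-\alpha)\mu\,(p(z)-1),\qquad \mu=e^{i\gamma}\cos\gamma,
$$
for some $p\in\mathcal{P}$ of the form \eqref{p}; the scaling $(1-\alpha)\mu$ is exactly the unique affine factor carrying the right half-plane onto the rotated half-plane while fixing $1$. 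Writing $p(z)=1+c_1z+c_2z^2+\cdots$ and expanding $zf'/f=1+a_2z+(2a_3-a_2^2)z^2+\cdots$, comparison of coefficients gives $a_2=(1-\alpha)\mu c_1$ and $a_3=\tfrac12(1-\alpha)\mu c_2+\tfrac12(1-\alpha)^2\mu^2c_1^2$.

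Next I would substitute these into \eqref{Gammaaa}. A short computation yields $\Gamma_1=-\tfrac12(1-\alpha)\mu c_1$ and $\Gamma_2=-\tfrac14(1-\alpha)\mu\big(c_2-2(1-\alpha)\mu c_1^2\big)$. Because $|\mu|=\cos\gamma$, taking moduli gives
$$
|\Gamma_2|-|\Gamma_1|=\frac{(1-\alpha)\cos\gamma}{4}\Big(|B_3c_2+B_2c_1^2|-|B_1c_1|\Big)=\frac{(1-\alpha)\cos\gamma}{4}\,\Psi_+(c_1,c_2),
$$
with $B_1=2$, $B_2=-2(1-\alpha)\mu$, and $B_3=1$. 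The key structural point, and the reason Lemma \ref{lemma2} was stated allowing $B_2\in\mathbb{C}$, is that here $B_2$ is genuinely complex; nonetheless all hypotheses $B_1>0$, $B_2\in\mathbb{C}$, $B_3\in\mathbb{R}$ are met, so the lemma applies verbatim.

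For the upper bound I would observe that $|2B_2+B_3|=|1-4(1-\alpha)\mu|=|\eta|$ while $|B_3|+B_1=3$, so \eqref{B+} selects its second branch precisely when $|\eta|\le 3$. I expect this to be the main computational obstacle: expanding $|\eta|^2=16(1-\alpha)^2\cos^2\gamma-8(1-\alpha)\cos^2\gamma+1$ and reducing $|\eta|\le 3$ to the elementary inequality $(1-\alpha)(1-2\alpha)\cos^2\gamma\le 1$, which holds on $0\le\alpha<1$, $|\gamma|<\pi/2$ (with equality only at $\alpha=\gamma=0$). Then \eqref{B+} gives $\Psi_+\le 2|B_3|=2$, hence $|\Gamma_2|-|\Gamma_1|\le(1-\alpha)\cos\gamma/2$. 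For the lower bound I would record $B_4=|4B_2+2B_3|=2|\eta|$ and note that the middle-branch condition $B_1^2\le 2|B_3|(B_4+2|B_3|)$ reduces to $0\le 4|\eta|$, always true; thus \eqref{B-} yields $\Psi_-\le 2B_1\sqrt{2|B_3|/(B_4+2|B_3|)}=4/\sqrt{|\eta|+1}$, and multiplying by $(1-\alpha)\cos\gamma/4$ gives the stated lower bound $-(1-\alpha)\cos\gamma/\sqrt{|\eta|+1}$. I would also check the isolated degeneracy $\eta=0$ (occurring at $\alpha=3/4$, $\gamma=0$), where the first branch of \eqref{B-} applies but returns the same value, so the formula remains uniform.

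Finally, for sharpness I would feed the extremal Carath\'eodory functions through the representation $zf'/f=1+(1-\alpha)\mu(p-1)$: the choice $p(z)=(1+z^2)/(1-z^2)$ (so $c_1=0$, $c_2=2$) realizes $\Psi_+=2$ and hence the upper bound, while a Möbius function of the form $p(z)=(1+2Az+z^2)/(1-z^2)$ with $A$ determined by $\eta$ and its phase rotated to align with $\arg\mu$ realizes the equality case of the middle branch of \eqref{B-}, giving the lower bound. The sharpness assertion of Lemma \ref{lemma2} guarantees that both extremal configurations are actually attained.
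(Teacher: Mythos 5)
Your proposal reproduces the paper's proof essentially step for step: your affine representation $zf'(z)/f(z)=1+(1-\alpha)\mu\,(p(z)-1)$ is exactly the paper's equation \eqref{spiral} solved for $zf'/f$ (since $e^{i\gamma}\cos\gamma - ie^{i\gamma}\sin\gamma=1$), your coefficient formulas $a_2=(1-\alpha)\mu c_1$ and $a_3=\tfrac{1}{2}(1-\alpha)\mu\bigl((1-\alpha)\mu c_1^2+c_2\bigr)$ coincide with the paper's, and the reduction to Lemma \ref{lemma2} with $B_1=2$, $B_2=-2(1-\alpha)\mu$, $B_3=1$, $B_4=2|\eta|$ and the same branch checks ($|\eta|\le 3$ for \eqref{B+}; $B_1^2\le 2|B_3|(B_4+2|B_3|)$ holding trivially for \eqref{B-}) is identical. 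You are in fact more careful than the paper at two points it dismisses as easy: the explicit reduction of $|\eta|\le3$ to $(1-\alpha)(1-2\alpha)\cos^2\gamma\le1$, and the degenerate point $\eta=0$ (i.e.\ $\alpha=3/4$, $\gamma=0$), where the first branch of \eqref{B-} applies but returns the same value $4/\sqrt{|\eta|+1}$.

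The one genuine defect is your explicit extremal for the lower bound. A real M\"obius function $p(z)=(1+2Az+z^2)/(1-z^2)$ with ``its phase rotated'' cannot attain equality when $\gamma\ne0$: a rotation $p(z)\mapsto p(e^{i\theta}z)$ sends $(c_1,c_2)\mapsto(e^{i\theta}c_1,e^{2i\theta}c_2)$, which multiplies $B_2c_1^2+B_3c_2$ by $e^{2i\theta}$ and so leaves $\Psi_\pm$ unchanged, while the unrotated function ($c_1=2A$, $c_2=2$) gives
\begin{equation*}
\Psi_-(c_1,c_2)=4A-2\left|1-A^2(1+\eta)\right|,
\end{equation*}
which at the natural choice $A=1/\sqrt{|\eta|+1}$ equals $4/\sqrt{|\eta|+1}-2\bigl||\eta|-\eta\bigr|/(1+|\eta|)$ and thus falls strictly short of $4/\sqrt{|\eta|+1}$ unless $\eta\ge0$, i.e.\ unless $\gamma=0$ and $\alpha\le3/4$. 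The correct extremal is the paper's two-parameter function $p(z)=\bigl(1+q_1(q_2+1)z+q_2z^2\bigr)/\bigl(1+q_1(q_2-1)z-q_2z^2\bigr)$ with $q_1=1/\sqrt{|\eta|+1}$ and $q_2=e^{i\arg\eta}$ (the paper writes $\tau$, evidently meaning $\eta$), in which the phases of $c_1$ and $c_2$ are tuned \emph{independently}; note also that the relevant phase is $\arg\eta$, not $\arg\mu$ as you wrote. Your closing appeal to the sharpness clause of Lemma \ref{lemma2} does legitimately repair this — the lemma supplies some extremal $p\in\mathcal{P}$, which the representation \eqref{spiral} converts into an extremal $f\in\mathcal{S}^*_{\gamma}(\alpha)$ — so the proof as a whole stands, but the specific function you describe is not the extremal one. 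Your upper-bound extremal $p(z)=(1+z^2)/(1-z^2)$ is correct (the paper omits it here, though it uses the same choice in its other theorems).
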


\begin{proof}
Let $f\in \mathcal{S}^*_{\gamma}(\alpha)$, then there exists a function $p\in \mathcal{P}$ such that
\begin{equation}\label{spiral}
p(z)=\cfrac{1}{1-\alpha}\bigg( \cfrac{1}{\cos \gamma}\bigg(e^{-i\gamma} \bigg(\cfrac{zf'(z)}{f(z)}\bigg)+i\sin \gamma\bigg)-\alpha\bigg).
\end{equation}
Equating the coefficients of $z^n$ on both the sides of \eqref{spiral} for $n=1,2$, we obtain
\begin{equation}\label{eqa2}
a_2=(1-\alpha)\mu c_1 \,\,\, \text{and} \,\,\,
	a_3=\cfrac{(1-\alpha)\mu}{2}\left((1-\alpha)\mu c_1^2+ c_2\right),
\end{equation}
where $\mu=e^{i\gamma}\cos \gamma$. Now using \eqref{Gammaaa} together with \eqref{eqa2}, we get

\begin{align}\label{cB2}
	|\Gamma_2|-|\Gamma_1|	&=\cfrac{(1-\alpha)\cos \gamma}{4}\left(|B_3c_2+B_2 c_1^2|-|B_1c_1|\right),
\end{align}
where
$$
B_1:=2,\,B_2:=-2(1-\alpha)\mu, \mbox{ and }B_3=1.
$$

\noindent It is easy to see that the inequality $|2B_2+B_3| \leq |B_3|+B_1$ is true for $-\pi/2< \gamma< \pi/2$ and $0\le \alpha <1$. Therefore by Lemma \ref{lemma2}, we get 
$$
|\Gamma_2|-|\Gamma_1| \leq \cfrac{(1-\alpha)\cos \gamma}{2}
$$
\noindent Note that $B_4=|8(1-\alpha)\mu-2|$. For the lower bound, it is easy to see that  the inequality $B_1 \geq B_4+2|B_3|$ doesnot holds and the inequality $B_1^2 \leq 2|B_3|(B_4+2|B_3|)$ holds for $-\pi/2< \gamma< \pi/2$ and $0\le \alpha <1$. Hence by Lemma \ref{lemma2}, we get 
\begin{equation}
|\Gamma_2|-|\Gamma_1|\geq -\cfrac{(1-\alpha)\cos \gamma}{\sqrt{|\eta|+1}}
\end{equation}
\noindent where $\eta=4(1-\alpha)\mu-1$. Equality holds for the function $f$ defined by \eqref{spiral} with
\begin{equation}\label{eqcp(z)}
p(z)=\cfrac{1+q_1(q_2+1)z+q_2z^2}{1+q_1(q_2-1)z-q_2z^2},
\end{equation}
where
$$
q_1=\cfrac{1}{\sqrt{|\tau|+1}} \mbox{ and } q_2=e^{i\arg \tau}.
$$
  This completes the proof.
\end{proof}
\noindent If we put  $\gamma=0$ in Theorem \ref{4444}, then we obtain the following result for the class of starlike function of order $\alpha$.
\begin{corollary}\label{444}
Let $0 \le \alpha < 1$ and  $f\in \mathcal{S}^*(\alpha)$ of the form \eqref{S}. Then
\begin{equation}
|\Gamma_2|-|\Gamma_1|\le \cfrac{1-\alpha}{2}
\end{equation}
and
\begin{equation}
|\Gamma_2|-|\Gamma_1|\geq \left \{
	\begin{array}{ll}
		  -\cfrac{\sqrt{1-\alpha}}{2}  & {\mbox{ if }} \,\,  0 \leq \alpha \leq \cfrac{3}{4},
		\\[4mm]
		  -\cfrac{1-\alpha}{\sqrt{4\alpha-2}}  &  {\mbox{ if }} \,\, \cfrac{3}{4} \leq \alpha < 1,
	\end{array}
	\right.
\end{equation}
\end{corollary}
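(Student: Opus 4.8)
The plan is to deduce this corollary as a direct specialization of Theorem \ref{4444} to the case $\gamma = 0$, since $\mathcal{S}^*_0(\alpha) = \mathcal{S}^*(\alpha)$. First I would substitute $\gamma = 0$ into the quantities appearing in Theorem \ref{4444}. Since $\mu = e^{i\gamma}\cos\gamma$, setting $\gamma = 0$ gives $\mu = 1$ and $\cos\gamma = 1$, both real. The upper bound $\frac{(1-\alpha)\cos\gamma}{2}$ then collapses immediately to $\frac{1-\alpha}{2}$, matching the stated upper estimate with no further work.

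For the lower bound, the controlling parameter is $\eta = 4(1-\alpha)\mu - 1$, which at $\gamma = 0$ becomes the real number $\eta = 4(1-\alpha) - 1 = 3 - 4\alpha$. The only genuine step is resolving the absolute value $|\eta|$ appearing in the bound $-\frac{(1-\alpha)\cos\gamma}{\sqrt{|\eta|+1}} = -\frac{1-\alpha}{\sqrt{|\eta|+1}}$, which forces a split at the threshold where $\eta$ changes sign, namely $\alpha = 3/4$. For $0 \le \alpha \le 3/4$ one has $\eta \ge 0$, so $|\eta| = 3 - 4\alpha$ and hence $|\eta| + 1 = 4(1-\alpha)$; taking square roots gives $\sqrt{|\eta|+1} = 2\sqrt{1-\alpha}$, whence the bound simplifies to $-\frac{1-\alpha}{2\sqrt{1-\alpha}} = -\frac{\sqrt{1-\alpha}}{2}$. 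For $3/4 \le \alpha < 1$ one has $\eta \le 0$, so $|\eta| = 4\alpha - 3$ and $|\eta| + 1 = 4\alpha - 2$, giving directly $-\frac{1-\alpha}{\sqrt{4\alpha - 2}}$. These two expressions are exactly the stated branches, and the two cases agree at $\alpha = 3/4$ (both yielding $-\tfrac14$), confirming consistency of the splitting.

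Sharpness is inherited from Theorem \ref{4444}: the extremal function is the one defined by \eqref{spiral} with the Carath\'eodory function \eqref{eqcp(z)}, now evaluated at $\gamma = 0$, so no new extremal construction is required. I do not anticipate any real obstacle here; the corollary is a routine corollary-by-specialization, and the only thing demanding a little care is bookkeeping the sign of $\eta = 3 - 4\alpha$ so that the absolute value is simplified correctly on each subinterval and the radical $\sqrt{4(1-\alpha)}$ is reduced to $2\sqrt{1-\alpha}$ in the first branch.
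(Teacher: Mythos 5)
Your proposal is correct and follows exactly the paper's route: the corollary is obtained by setting $\gamma=0$ in Theorem \ref{4444}, and your case-splitting on the sign of $\eta=3-4\alpha$ (with the simplification $\sqrt{4(1-\alpha)}=2\sqrt{1-\alpha}$ on the first branch and agreement of both branches at $\alpha=3/4$) is precisely the computation the paper leaves implicit. Nothing is missing, and sharpness indeed transfers from the extremal functions of the theorem specialized to $\gamma=0$.
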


\noindent Next, we obtain the sharp lower and upper bounds for $|\Gamma_2|-|\Gamma_1|$ when $f$ belongs to the class  $\mathcal{C}_{\gamma}(\alpha)$.
\begin{theorem}\label{3}
Let $-\pi/2< \gamma< \pi/2$ and $0\le \alpha <1$. For every $f\in \mathcal{C}_{\gamma}(\alpha)$ of the form \eqref{S}, the following sharp inequality holds.
	\begin{equation}\label{55555}
	|\Gamma_2|-|\Gamma_1|\le \cfrac{(1-\alpha)\cos \gamma}{6}
	\end{equation}
	and
	\begin{equation}\label{l}
|\Gamma_2|-|\Gamma_1|\ge\left \{
\begin{array}{lll}
\cfrac{(\alpha-1)\cos \gamma \,}{12}(6-|\beta|) , & {\mbox{ if }} |\beta| \le 1,\\[5mm]
\left(\cfrac{(\alpha-1)\cos \gamma}{12}\right)\left(2+\cfrac{9}{|\beta|+2}\right), & {\mbox{ if }} 1 \le |\beta| \le 5/2,\\[5mm]
\left(\cfrac{(\alpha-1)\cos \gamma}{2}\right)\left(\sqrt{\cfrac{2}{|\beta|+2}}\right), &{\mbox{ if }} |\beta| \ge 5/2.
\end{array}
\right.
\end{equation}
where $\beta=5(1-\alpha)\mu-2$ with $\mu=e^{i\gamma}\cos \gamma$.
\end{theorem}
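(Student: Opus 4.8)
The plan is to mirror the argument used for the spirallike family in Theorem~\ref{4444}, replacing the operator $zf'(z)/f(z)$ by $1+zf''(z)/f'(z)$. First I would record the representation: for $f\in\mathcal{C}_\gamma(\alpha)$ there is a $p\in\mathcal{P}$ of the form \eqref{p} with
$$p(z)=\frac{1}{1-\alpha}\left(\frac{1}{\cos\gamma}\left(e^{-i\gamma}\left(1+\frac{zf''(z)}{f'(z)}\right)+i\sin\gamma\right)-\alpha\right),$$
the convex analogue of \eqref{spiral}. Solving for $1+zf''(z)/f'(z)$, expanding both sides in powers of $z$, and comparing the coefficients of $z$ and $z^2$ should give, with $\mu=e^{i\gamma}\cos\gamma$,
$$a_2=\frac{(1-\alpha)\mu\,c_1}{2},\qquad a_3=\frac{(1-\alpha)\mu}{6}\bigl(c_2+(1-\alpha)\mu\,c_1^2\bigr).$$

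Next I would substitute these into \eqref{Gammaaa}. Combining the $c_1^2$ contributions of $-\tfrac12 a_3$ and $\tfrac34 a_2^2$ (the net coefficient being $-\tfrac{1}{12}+\tfrac{3}{16}=\tfrac{5}{48}$) yields $\Gamma_1=-\tfrac{(1-\alpha)\mu}{4}c_1$ and $\Gamma_2=-\tfrac{(1-\alpha)\mu}{12}c_2+\tfrac{5(1-\alpha)^2\mu^2}{48}c_1^2$. Since $|\mu|=\cos\gamma>0$ for $|\gamma|<\pi/2$, factoring out $\tfrac{(1-\alpha)\cos\gamma}{12}$ casts the target quantity exactly into the form handled by Lemma~\ref{lemma2}, namely
$$|\Gamma_2|-|\Gamma_1|=\frac{(1-\alpha)\cos\gamma}{12}\,\Psi_+(c_1,c_2),\qquad B_1=3,\quad B_2=-\frac{5(1-\alpha)\mu}{4},\quad B_3=1.$$
The two decisive auxiliary quantities are $|2B_2+B_3|=|\beta|/2$ and $B_4=|4B_2+2B_3|=|\beta|$, where $\beta=5(1-\alpha)\mu-2$.

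For the upper bound I would note that $|\beta|=|5(1-\alpha)\mu-2|\le 5(1-\alpha)\cos\gamma+2\le 7<8$, so the branch condition $|2B_2+B_3|\ge|B_3|+B_1=4$ of \eqref{B+} (equivalently $|\beta|\ge 8$) can never hold; the second branch then gives $\Psi_+\le 2|B_3|=2$ uniformly, producing the single clean bound $\tfrac{(1-\alpha)\cos\gamma}{6}$ with no case split. For the lower bound I would apply \eqref{B-} with $B_4+2|B_3|=|\beta|+2$: the condition $B_1\ge B_4+2|B_3|$ reduces to $|\beta|\le 1$, the condition $B_1^2\le 2|B_3|(B_4+2|B_3|)$ reduces to $|\beta|\ge 5/2$, and the intermediate range $1\le|\beta|\le 5/2$ falls into the third branch, giving $2+9/(|\beta|+2)$. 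Using $\Psi_-=-\Psi_+$ together with the prefactor then reproduces the three cases of \eqref{l} exactly; as a sanity check the branch values agree at the endpoints $|\beta|=1$ and $|\beta|=5/2$.

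The main obstacle will be twofold. The first is carrying the complex parameter $\mu$ cleanly through the coefficient computation; Lemma~\ref{lemma2} does allow $B_2\in\mathbb{C}$, so this is careful bookkeeping rather than a conceptual hurdle, but one must verify that $|2B_2+B_3|$ and $|4B_2+2B_3|$ collapse to $|\beta|/2$ and $|\beta|$. The second, and more delicate, is exhibiting the sharp extremal functions for each regime. For the upper bound, taking $p(z)=(1+z^2)/(1-z^2)$ (so $c_1=0$, $c_2=2$) forces $\Gamma_1=0$ and attains $\tfrac{(1-\alpha)\cos\gamma}{6}$. For the lower bound I would use the extremizers attached to Lemma~\ref{lemma2}, of the type \eqref{eqcp(z)} with parameters $q_1,q_2$ chosen as functions of $\beta$ to saturate the relevant branch, and then confirm that the associated $f$ recovered from the defining representation above genuinely lies in $\mathcal{C}_\gamma(\alpha)$ and realizes the stated constant.
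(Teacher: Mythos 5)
Your proposal is correct and follows essentially the same route as the paper's own proof: the same Carath\'eodory representation \eqref{convexp(z)}, the same coefficient identities \eqref{eqa2a3}, and the same reduction to Lemma \ref{lemma2} with $B_1=3$, $B_2=-\tfrac{5}{4}(1-\alpha)\mu$, $B_3=1$, leading to identical branch conditions $|\beta|\le 1$, $1\le|\beta|\le 5/2$, $|\beta|\ge 5/2$ and the same extremal candidates ($p(z)=(1+z^2)/(1-z^2)$ above, functions of type \eqref{eqcp(z)} below). The only cosmetic difference is that you exclude the first branch of \eqref{B+} via the triangle-inequality bound $|\beta|\le 7<8$ where the paper uses the sharper estimate $|\beta|\le 3$; both suffice.
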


\begin{proof}
Let $f\in \mathcal{C}_{\gamma}(\alpha)$, then there exists a function $p\in \mathcal{P}$ such that
\begin{equation}\label{convexp(z)}
p(z)=\cfrac{1}{1-\alpha}\bigg\{ \cfrac{1}{\cos \gamma}\bigg(e^{-i\gamma} \bigg(1+\cfrac{zf''(z)}{f'(z)}\bigg)+i\sin \gamma\bigg)-\alpha\bigg\}.
\end{equation}
Equating the coefficients of $z^n$ on both the sides of \eqref{convexp(z)} for $n=1,2$, we obtain
\begin{equation}\label{eqa2a3}
a_2=\cfrac{(1-\alpha)\mu}{2} c_1 \,\,\, \text{and} \,\,\,
	a_3=\cfrac{(1-\alpha)\mu}{6}\left((1-\alpha)\mu c_1^2+ c_2\right),
\end{equation}
where $\mu=e^{i\gamma}\cos \gamma$. Now using \eqref{Gammaaa}, we obtain
\begin{align}\label{cB2}
	|\Gamma_2|-|\Gamma_1|&=\cfrac{(1-\alpha)\cos \gamma}{12}\bigg(|c_2-\cfrac{5}{4}(1-\alpha)\mu c_1^2|-|3c_1|\bigg)\nonumber\\[2mm]
	&=\cfrac{(1-\alpha)\cos \gamma}{12}\bigg(|B_3c_2+B_2 c_1^2|-|B_1c_1|\bigg),
\end{align}
where
$$
B_1:=3,\,B_2:=-\cfrac{5}{4}(1-\alpha)\mu, \mbox{ and }B_3=1.
$$
For the upper bound, it is easy to see that the first condition $|2B_2+B_3|\ge |B_3|+B_1$ is not satisfied since $|5(1-\alpha)\mu-2|\le 3$. 
By using Lemma \ref{lemma2} and the equation \eqref{cB2}, we obtain that 
$$
|\Gamma_2|-|\Gamma_1| \le \cfrac{(1-\alpha)\cos \gamma}{6}.
$$
This proves the inequality \eqref{55555}.

\medskip

\noindent We next prove the lower bound in \eqref{l} by checking the condition of Lemma \ref{lemma2} for $\Psi_-(c_1,c_2)$. Note that the inequality $B_1\ge B_4+2|B_3|$ is true for $|5(1-\alpha)\mu-2|\le 1$
and the inequality $ B_1^2\le 2|B_3|(B_4+2|B_3|)$ is true for $|5(1-\alpha)\mu-2|\ge 5/2$. Thus, by applying Lemma \ref{lemma2}, we obtain
$$
\Psi_-(c_1,c_2)\le\left \{
\begin{array}{lll}
6-|\beta| , & {\mbox{ if }} |\beta| \le 1,\\[3mm]
2+\cfrac{9}{|\beta|+2}, & {\mbox{ if }} 1 \le |\beta| \le 5/2,\\[5mm]
6\left(\sqrt{\cfrac{2}{|\beta|+2}}\right), &{\mbox{ if }} |\beta| \ge 5/2.
\end{array}
\right.
$$
where $\beta=5(1-\alpha)\mu-2$. By Substituting the above inequality in \eqref{cB2} we obtain the required inequality \eqref{l}.

\medskip

\noindent Equality holds in \eqref{55555} when $f$ is defined by \eqref{convexp(z)} with $p(z)=(1+z^2)/(1-z^2)$. For $|\beta|\ge 5/2$, equality holds for the function $f$ defined by \eqref{convexp(z)} with
\begin{equation}\label{eqcp(z)}
p(z)=\cfrac{1+q_1(q_2+1)z+q_2z^2}{1+q_1(q_2-1)-q_2z^2},
\end{equation}
where $
q_1=1/\sqrt{1+|\beta|} \mbox{ and } q_2=e^{i\arg \beta}$.
\noindent For $|\beta|\le 5/4$, equality holds for the function $f$ given by \eqref{convexp(z)} where $p(z)$ is of the form \eqref{eqcp(z)} with 
 $q_1=3/2(1+|\beta|) \mbox{ and } q_2=e^{i\arg \beta}$.
\noindent This completes the proof of this theorem.
\end{proof}
If we put  $\gamma=0$ in Theorem \ref{3}, then we obtain the following result for the class of convex function of order $\alpha$.
\begin{corollary}\label{4}
Let $0 \le \alpha < 1$ and  $f\in \mathcal{C}(\alpha)$ of the form \eqref{S}. Then
\begin{equation}
|\Gamma_2|-|\Gamma_1|\le \cfrac{1}{6}(1-\alpha)
\end{equation}
and

\begin{equation}
|\Gamma_1|-|\Gamma_2|\le\left \{
\begin{array}{lllll}
\sqrt{\left(\cfrac{1-\alpha}{10}\right)}, & {\mbox{ if }} 0 \le \alpha \le \cfrac{1}{10},\\[5mm]
\cfrac{1}{60}(19-10\alpha), & {\mbox{ if }} \cfrac{1}{10} \le \alpha \le \cfrac{2}{5},\\[5mm]
\cfrac{1}{12}(1-\alpha)(3+5\alpha), &{\mbox{ if }} \cfrac{2}{5} \le \alpha \le \cfrac{3}{5},\\[5mm]
\cfrac{1}{12}(1-\alpha)(9-5\alpha), & {\mbox{ if }} \cfrac{3}{5} \le \alpha \le \cfrac{4}{5},\\[5mm]
\cfrac{(1-\alpha)(10\alpha+7)}{12(5\alpha-1)}, & {\mbox{ if }} \cfrac{4}{5} \le \alpha  \le 1.\\[5mm]
\end{array}
\right.
\end{equation}
All the inequalities are sharp.
\end{corollary}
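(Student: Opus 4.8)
The plan is to derive Corollary \ref{4} as the direct specialization of Theorem \ref{3} to $\gamma=0$, followed by a routine simplification of each branch. First I would put $\gamma=0$, so that $\cos\gamma=1$ and $\mu=e^{i\gamma}\cos\gamma=1$; in particular $\mu$ becomes a (positive) real number, and the quantity $\beta=5(1-\alpha)\mu-2$ collapses to the real number $\beta=3-5\alpha$. The upper estimate is then immediate: substituting $\cos\gamma=1$ into \eqref{55555} gives $|\Gamma_2|-|\Gamma_1|\le \tfrac16(1-\alpha)$, which is exactly the asserted upper bound.

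For the lower bound I would feed $|\beta|=|3-5\alpha|$ into the three-branch formula \eqref{l}, written in the reversed form $|\Gamma_1|-|\Gamma_2|\le\cdots$. The key step, and essentially the only one needing care, is translating the three magnitude regimes $|\beta|\le 1$, $1\le|\beta|\le 5/2$, $|\beta|\ge 5/2$ into intervals of $\alpha$. Solving $|3-5\alpha|\ge 5/2$ gives $\alpha\in[0,1/10]$; the middle regime $1\le|3-5\alpha|\le 5/2$ gives $\alpha\in[1/10,2/5]\cup[4/5,1)$; and $|3-5\alpha|\le 1$ gives $\alpha\in[2/5,4/5]$. Because $|3-5\alpha|$ has a corner at $\alpha=3/5$, this last interval must be further split at $\alpha=3/5$ according to the sign of $3-5\alpha$. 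This splitting is precisely why the three cases of Theorem \ref{3} unfold into the five cases recorded in the corollary, and it is the main (purely bookkeeping) obstacle: one must match the magnitude-based cutoffs to the correct $\alpha$-intervals without dropping or overlapping a subinterval.

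It then remains to simplify each branch, each of which is a one-line computation. On $[0,1/10]$ and $[1/10,2/5]$ one uses $|\beta|+2=5(1-\alpha)$: the square-root branch becomes $\tfrac{1-\alpha}{2}\sqrt{2/(5(1-\alpha))}=\sqrt{(1-\alpha)/10}$, and the reciprocal branch $2+9/(|\beta|+2)$ becomes $(19-10\alpha)/(5(1-\alpha))$, giving $(19-10\alpha)/60$. On $[4/5,1)$ one instead uses $|\beta|+2=5\alpha-1$, producing $(1-\alpha)(10\alpha+7)/(12(5\alpha-1))$. On the two halves of $[2/5,4/5]$ the substitutions $6-|\beta|=3+5\alpha$ (for $\alpha\le 3/5$) and $6-|\beta|=9-5\alpha$ (for $\alpha\ge 3/5$) yield $\tfrac1{12}(1-\alpha)(3+5\alpha)$ and $\tfrac1{12}(1-\alpha)(9-5\alpha)$ respectively.

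Finally, sharpness transfers for free from Theorem \ref{3}. At $\gamma=0$ we have $\mu=1$, so $\arg\beta\in\{0,\pi\}$ and the extremal Carath\'eodory functions built from \eqref{eqcp(z)} reduce to functions with real coefficients; composed through \eqref{convexp(z)} these specialize to the functions realizing equality in each branch, so no new extremal analysis is required and the inequalities remain sharp.
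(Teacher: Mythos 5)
Your proposal is correct and takes essentially the same route as the paper, which obtains Corollary \ref{4} precisely by setting $\gamma=0$ in Theorem \ref{3}: with $\mu=1$ and $\beta=3-5\alpha$, your translation of the regimes $|\beta|\le 1$, $1\le|\beta|\le 5/2$, $|\beta|\ge 5/2$ into the five $\alpha$-intervals (splitting $[2/5,4/5]$ at $\alpha=3/5$ where $|3-5\alpha|$ has its corner) and the ensuing one-line simplifications reproduce all five stated bounds exactly. Your observation that sharpness transfers from the extremal functions of Theorem \ref{3} (with $\arg\beta\in\{0,\pi\}$ at $\gamma=0$) also matches the paper's treatment, so nothing further is needed.
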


{\bf Acknowledgement:}
The research of the first named author is supported by SERB-CRG, Govt. of India and the second named author's research work is supported by UGC-SRF.

\end{document}